\documentclass[a4paper,11pt,twoside]{article}
\usepackage[T1]{fontenc}
\usepackage[latin9]{inputenc}
\usepackage{amsmath,amssymb,amsthm,graphicx,nicefrac}
\usepackage{microtype,url}

\usepackage{hyperref}
\hypersetup{colorlinks=false,pdfborder=0 0 0.2}

\newtheorem{prop}{Proposition}[section]
\newtheorem{coro}[prop]{Corollary}
\newtheorem{thm}[prop]{Theorem}

\newtheorem{defi}[prop]{Definition}
\newtheorem{ansatz}[prop]{Ansatz}


\makeatletter
\@addtoreset{equation}{section}
\makeatother
\def\DD{\displaystyle}
\def\egaldef{\stackrel{\mbox{\tiny def}}{=}}
\newcommand\C{\mathcal{C}}
\newcommand\D{\mathcal{D}}
\newcommand\B{\mathcal{B}}

\newcommand\R{\mathcal{R}}
\newcommand\LL{\mathbb{L}}

\renewcommand\L{\mathcal{L}}
\newcommand\n{^{\scriptscriptstyle (N)}}
\renewcommand\>[1]{\vec{#1}}

\begin{document}
\title{Modeling a Case of Herding Behavior in a Multi-Player Game\thanks{This work was supported
    by grant 109-2167/R from the \emph{Région Île-de-France}}}
\author{Guy Fayolle\thanks{INRIA Paris--Rocquencourt --- Domaine de
 Voluceau B.P.\ 105, 78153 Le Chesnay cedex (France)} \and Jean-Marc Lasgouttes\footnotemark[2]}
\date{\today }

\maketitle

\begin{abstract}
\noindent
The system mentioned in the title belongs to the family of the
so-called massively multi-player online social games (MMOSG). It
features a scoring system for the elements of the game that is
prone to herding effects. We analyze in detail its stationary regime
in the thermodynamic limit, when the number of players tends to
infinity. In particular, for some classes of input sequences and
selection policies, we provide necessary and sufficient conditions for
the existence of a complete meanfield-like measure, showing off an
interesting \emph{condensation} phenomenon.
\end{abstract}

\smallskip\noindent
\emph{Keywords}: Thermodynamical limit, condensation, ergodicity, transience, non-linear differential system, Banach space.

\smallskip\noindent
\emph{AMS $2010$ Subject Classification}: primary 60J28; secondary 34L30, 37C40.

\section{Description of the basic model}\label{sec:introduction}
\emph{Ma Micro Plan\`ete}~\cite{MMP} is a geolocalized massively
multi-player online social game which entices players to use
sustainable means of transport. At the heart of the game is a
community-driven creation of a set of \emph{points of interest}
(POIs): interesting places, events, or even traffic incidents. The
popularity of the POIs is assessed by scores. At his turn, the player
is presented with a number of POIs to visit in his neighborhood, which
have been selected through a mechanism that favors already popular
ones. Then the selected POI has its score increased by a random value.
It is not difficult to see that there is a risk of \emph{herding}
behavior, that would concentrate all the attention on a few places
only. The aim of this paper is to present some results about the POIs
dynamics by means of a probabilistic model, and in particular to
determine when a stationary solution exists.

Assume there are $N$ players in the game and let $n_i(t;N)$ be the
number of POIs having an integer score $i$ at time $t$. The state of
the system at time $t$ can be described by the infinite sequence
\[
 \>R(t;N) = \frac{1}{N} [n_1(t;N), \ldots, n_i(t;N),\ldots].
\]

The dynamics of the model is
\begin{itemize}
\item POIs are created according to a Poisson process of rate $\lambda
N$. Each new POI is granted an initial score $k$ with probability
$\varphi_k,k\ge1$, where the $\varphi_k$'s form a proper
probability distribution.
\item For any $i\ge1$, each POI with score $i$ decreases its score by
$1$ after a random time exponentially distributed at rate $\mu$, where
$\mu$ is a strictly positive constant.
\item The $N$ players are all independent and visit POIs at instants
forming a global Poisson process with rate $\alpha N$. Upon being
visited, a POI sees its score increased by $j\ge1$ with probability
$\theta_j$, independently of any other event, where the sequence
$\{\theta_j,j\ge1\}$ forms a proper probability distribution. We shall
implicitly take $\theta_0=0$, which is in no way a restriction, as it
is simply tantamount to modifying the parameter $\alpha$.
\item When he decides to play, at the instant of a Poisson process of
parameter $\alpha N$, a player selects a POI having score $i\ge1$ with
probability $\pi_i(\>R(t;N))$, written as a functional of the state of
the system, with the normalizing condition
\[
\sum_{i\ge1} \pi_i(\>R(t;N)) =1.
\]
\end{itemize}

In accordance with the description of the game, we shall essentially
consider selection policies such that the probability $\pi$ of
selecting a POI, given its score $i$, is a \emph{non-decreasing}
function of $i$.

The following typical  scoring rules are used in \emph{Ma Micro Plan\`ete}.
\begin{itemize}
\item Each player starts a journey at rate $\alpha$ and selects a POI
to visit with a probability proportional to its current score.
\begin{itemize}
\item with probability $p$, the player adds new information to
the POI, the score of which is increased by $15$ points;
\item otherwise, the POI is merely visited and its score
gains $5$ points; this happens with probability $1-p$.
\end{itemize}
\item Each player creates POIs at rate $\lambda$ and their
initial score is set to $50$.
\item The POIs decrease their scores by $1$ at a rate $\mu=3$ per day.
\item A POI disappears as soon as its score becomes zero
or negative.
\item The values of $\alpha$, $p$ and $\lambda$ can only be measured
empirically from concrete game statistics.
\end{itemize}

A way of describing this model might be in terms of queues. Consider a
possibly infinite array of queues $M/M/1$ queues with service rate
$\mu$. New queues are created at rate $\lambda$ and batches of
customers join an existing queue with score $i\ge1$ at rate
$\alpha\pi_i(\>R(t;N))$. Such systems have already been described, for
example when the customers join the shortest of two
queues~\cite{VveDobKar}. The situation described in this paper is
unusual due to the bias towards selecting the \emph{longest} queue.
This only makes sense because there is no cost associated to visiting
a highly popular POI, whereas joining a long queue is expensive in
terms of waiting time. A similar situation~\cite{VeeDeb} occurs when
customers, unsure of the quality of two restaurants, choose the busier
one.

The model can have ramifications with other areas of interest, e.g.,
dynamics of populations, social networks, chemical or energy networks
in physics.

In this work, we examine the system in the so-called
\emph{thermodynamic limit}, as $N\to\infty$, the existence of which is
proved in Section~\ref{sec:thermo}, under general conditions, where
the probability of increasing the score of a visited POI is a function
of the state of the system. Section~\ref{sec:stat} is devoted to the
stationary regime, for which complete (but surprising!) answers are
given for various input sequences and selection policies, and the
existence of a so-called herding behavior is connected to possible
condensation phenomena. Section~\ref{sec:examples}
presents miscellaneous simple case studies. The final
Section~\ref{sec:general} outlines a possible method of solution for
general input sequences, and also discusses a brief list of unsolved
questions.

\section{The thermodynamical limit}\label{sec:thermo}
To capture information on the behavior of the system under various
game policies, we proceed by scaling and analyze the
\emph{thermodynamical limit}, when the number $N$ of players becomes
large. In mathematical terms, we consider the system when
$N\to\infty$, for any fixed $t$.
\subsection{Notation}
It will be convenient to gather here most of the notational material
concerning input sequences and mathematical symbols.
\begin{itemize}
\item $\mathcal{E} \egaldef \{\>y = [y_1,\dots,y_i,\ldots], y_i\in
\mathbb{Q}\}$, where $\mathbb{Q}_+$ is the set of positive rational
numbers. $\mathbb{R}^\infty_+$ being the set of non-negative real
numbers, $\>e_j$ will stand for the $j$-th unit vector of
$\mathbb{R}^\infty_+$. We shall also denote by
$\LL\in\mathbb{R}^\infty_+$ the subset of bounded sequences with the norm 
$\|\>y\| \egaldef \sum_{i\ge1} |y_i|< M<\infty$.
\item $E$ being an arbitrary metric space, $\B(E)$ is the space of
 bounded real-valued Borel measurable functions on $E$,
 $\C(E)\subset\B(E)$ the Banach subspace of bounded continuous functions.
\item We shall need the three following subspaces of $\C(\LL)$:
 \begin{eqnarray*}
\C_1(\LL) & \egaldef &\bigg\{ f : \sup_{j\ge1}\, \left|\frac{\partial f}{\partial y_j} \right| \leq C \biggr\}, 
\\[0.1cm]
 \C_2(\LL) & \egaldef & \biggl\{f : \sup_{j,k\geq 1}\, \biggl| \frac{\partial^2 f}{\partial y_j\partial y_k}
\biggr| \leq H  \bigg\},
\end{eqnarray*}
$C,H$ being arbitrary positive constants.
\item For any complete separable metric space $S$, let $D_S[0,\infty]$
denote the space of right continuous functions $f:[0,\infty]
\rightarrow S$ with left limits, endowed with the Skorokhod topology.
\end{itemize}

\subsection{The main limit theorem}\label{th:thermo}
Clearly $\>R(t;N)$ is a continuous time Markov chain, and its
generator $G\n$, for $f\in\C_1(\LL)$, is the operator given by,
\begin{align}\label{eq:gen}
G\n f(\>y) &= \lambda N \sum_{j\ge1} \varphi_j \left[f\left(\>y+\frac{\>e_j}{N}\right) - f(\>y)\right]\nonumber\\[0.2cm]
	&+\mu N \sum_{j\ge1} y_j \left[ f\left(\>y+\frac{\>e_{j-1}}{N} - \frac{\>e_j}{N}\right) - f(\>y)\right]
	\nonumber\\[0.2cm]
		&+ \alpha N \sum_{j\ge1} \pi_j(\>y)\sum_{i\ge1}\theta_i  \left[f\left(\>y - \frac{\>e_j}{N}
	+ \frac{\>e_{i+j}}{N}\right) - f(\>y)\right].
\end{align}
Throughout this study, the game will be subject to the following
reasonably weak assumption.
\paragraph{Assumption (L)} \emph{For all $\>r\in\LL$}, the
probability distribution $\pi: \LL\to\LL$, considered as an infinite
vector $\{\pi_i( \>r), i\ge1\}$ with values in the Banach space
$\LL$ of absolutely summable sequences, will be supposed to be
boundedly and twice continuously differentiable (see e.g.~\cite{Car}).
In this case, all partial derivatives
$\frac{\partial\pi_i(\>y)}{\partial y_j}, j,k\ge1, \>y\in\LL$, belong
to $\C_1(\LL)$ and the famous Lipschitz condition is also satisfied.

 \smallskip In particular the above assumption (\textbf{L}) holds  for
the forthcoming policy, denoted by $\textbf{P}_{\>a}$, which will be
analyzed in Section~\ref{sec:stat},
\[
\pi_i(\>r)= \frac{a_ir_i}{\sum_{j\ge1}a_jr_j},
\]
where the $a_i$'s form a bounded sequence of positive numbers.

Then, with the notation $\theta_0=\pi_0(.)=0$, the  following proposition holds.
\begin{thm}\label{thm:thermo}
Suppose that, as $N\to\infty$, $\>R(0;N)$ converges in
distribution to $\>R(0)\in\LL$.
Then the Markov process $\>R(t;N)$ converges also weakly in
$D[0,\infty]$ to a \emph{deterministic} dynamical system
$\>r(.)\in\mathbb{R}^\infty_+$, which evolves according to the
following infinite system of nonlinear differential equations
\begin{equation}\label{eq:sys}
\frac{\mathrm{d}r_i(t)}{\mathrm{d}t}
  =  \lambda\varphi_i+\mu \bigl[r_{i+1}(t)-r_i(t)\bigr] +
  \alpha \bigl[\sum_{j=0}^{j=i} \theta_j \pi_{i-j}(\>r(t))-\pi_i(\>r(t))\bigr],
\end{equation}
where $\pi_i(\>r(t))$ denotes the probability that the currently
visited POI has score~$i$. Moreover, under Assumption (\textbf{L}) and
for any given finite initial condition $\>r(0)\in\LL$, the
system~\eqref{eq:sys} admits a unique solution, for all $t\ge0$.
\end{thm}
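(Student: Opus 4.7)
The plan is to invoke a Kurtz-type mean-field convergence theorem, after identifying the formal limit of $G\n$, and then to solve the resulting Cauchy problem in the Banach space $\LL$ by Picard iteration.

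For the generator, I would Taylor-expand each difference appearing in \eqref{eq:gen}: for $f\in\C_2(\LL)$,
\[
N\left[f\!\left(\>y+\tfrac{\>e_j}{N}\right)-f(\>y)\right] = \frac{\partial f}{\partial y_j}(\>y) + O\!\left(\tfrac{1}{N}\right),
\]
with analogous expansions for the other two increment types; the $O(1/N)$ remainders are controlled uniformly on $\LL$ by the second-derivative bound $H$ defining $\C_2(\LL)$, together with the fact that at most two unit vectors are involved per transition. Regrouping the three sums, the coefficient of $\partial f/\partial y_i$ in the limit is exactly the $i$-th component of the right-hand side of \eqref{eq:sys}. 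Hence $G\n f\to Gf$ uniformly, where $Gf(\>y)=\sum_{i\ge1}F_i(\>y)\,\partial f/\partial y_i(\>y)$ is the first-order transport operator associated to \eqref{eq:sys}.

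Next, I would establish tightness of $\{\>R(\cdot;N)\}$ in $D_\LL[0,\infty]$. The compact containment condition I would obtain via a Dynkin-formula estimate yielding $\mathbb{E}\|\>R(t;N)\|\le \|\>R(0;N)\|+\lambda t$, using that births add unit mass at rate $\lambda$, visits preserve $\|\>R\|$, and $\mu$-transitions can only decrease it; the Aldous modulus-of-continuity criterion follows from the fact that each jump changes $\>R$ by at most $2/N$ in norm. Standard Ethier--Kurtz martingale-problem arguments then identify any weak limit as a solution of the martingale problem for $G$, which for a first-order operator reduces to the deterministic ODE \eqref{eq:sys}.

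Finally, for existence and uniqueness of \eqref{eq:sys}, I would view the right-hand side as a map $F:\LL\to\LL$ and apply Picard--Lindel\"of. That $F$ lands in $\LL$ follows from $\sum_i\varphi_i=\sum_j\theta_j=\sum_k\pi_k(\>r)=1$ and a Fubini rearrangement of $\sum_i\sum_{j=0}^i\theta_j\pi_{i-j}(\>r)$, giving $\|F(\>r)\|\le \lambda+2\mu\|\>r\|+2\alpha$; the Lipschitz property is inherited from the boundedness of the shift on $\LL$ and from the Lipschitz constant for $\pi$ supplied by Assumption~(\textbf{L}). Local existence and uniqueness follow, and the telescoping a priori identity $\frac{d}{dt}\|\>r(t)\| = \lambda - \mu r_1(t)\le \lambda$ prevents blow-up and extends the solution to all $t\ge0$. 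The main obstacle, in my view, is the tightness step: since $\LL$ is not locally compact, compact containment cannot be read off the state space and must be produced by hand via the moment bound above, and one must work with a sufficiently rich core of cylindrical test functions in $\C_2(\LL)$ for the martingale problem to characterize the limit uniquely.
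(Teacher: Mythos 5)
Your plan follows the paper only in its first and last steps: the second-order Taylor expansion of $G\n$ identifying the transport operator $G$ (identical to the paper's computation, including the use of the $\C_2(\LL)$ bound $H$ to control the remainder), and the well-posedness of the Cauchy problem \eqref{eq:sys} in $\LL$ (the paper cites \cite{Car,Ince} where you run Picard--Lindel\"of explicitly; your a priori identity $\frac{d}{dt}\|\>r(t)\|=\lambda-\mu r_1(t)$ is consistent with \eqref{eq:trans2} at $z=1$ and correctly rules out blow-up). For the convergence itself, however, you take a genuinely different route: tightness in $D_\LL[0,\infty]$ plus identification of limit points via the martingale problem. The paper instead uses the semigroup-convergence machinery of Ethier--Kurtz (Theorem~\ref{thm:convergence}): it builds the deterministic flow $\>x(t,\>a)$, defines the contraction semigroup $T(t)f(\>a)=f(\>x(t,\>a))$, uses the twice-differentiability of the flow in $\>a$ to show via Theorem~\ref{thm:core} that $\C_1(\LL)$ (and then $\C_2(\LL)$) is a core for its generator $B=\overline{G}$, and concludes from \eqref{eq:limgen}. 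The payoff of the paper's route is precisely that no tightness or compact containment argument is needed; the price is the core verification, which is where the regularity of the flow enters.

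This matters because the step you yourself single out as the main obstacle contains a genuine gap as proposed. Compact containment in $\LL$ cannot be produced by the moment bound $\mathbb{E}\|\>R(t;N)\|\le\|\>R(0;N)\|+\lambda t$: norm-bounded sets in $\ell^1$ are not relatively compact, since compactness there requires uniformly small tails. You would need to show, uniformly in $N$ and in $t\le T$, that $\sum_{i\ge M}R_i(t;N)\to0$ in probability as $M\to\infty$, i.e.\ control a Lyapunov functional of the form $\sum_i c_iR_i(t;N)$ with $c_i\to\infty$. Because the dynamics pushes mass upward (new POIs enter at level $k$ with probability $\varphi_k$, visits shift mass up by $\theta$), such a bound is not free: it requires either moment assumptions on $\varphi$ and $\theta$ that the theorem does not impose (recall that Assumption~(\textbf{F}), $\theta'(1)\varphi'(1)<\infty$, is only introduced later, in Section~\ref{sec:stat}), or a truncation/coupling argument you have not sketched. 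Either supply that tail estimate explicitly, or switch to the paper's semigroup route, where the issue does not arise.
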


\begin{proof}
The method to solve this weak convergence problem is somehow classical
(see for example the footsteps of~\cite{DelFay}), and relies on
theoretical results of~\cite{EK}. First, a second order Taylor's
expansion in~\eqref{eq:gen} yields immediately, for any $f\in
\C_2(\LL)$,
\begin{equation}\label{eq:limgen}
G\n f(\>y) = Gf(\>y) + \mathcal{O}\left(\frac{1}{N}\right),
\end{equation}
where $G$ is the operator with domain $\C_1(\LL)$ satisfying
  \begin{align}
 Gf(\>y) & = \lambda \sum_{j\ge1} \varphi_j \frac{\partial f}{\partial y_j} +
 \mu \sum_{j\ge1} y_j  \left(\frac{\partial f}{\partial y_{j-1}} - \frac{\partial f}{\partial y_j}\right)(\>y)     \nonumber \\[0.2cm]
 & + \alpha  \sum_{j\ge1} \pi_j(\>y) \sum_{i\ge1}\theta_i \left(\frac{\partial f}{\partial y_{i+j}} - \frac{\partial f}{\partial y_j}\right)(\>y).\label{eq:gen2}
\end{align}
Indeed, taking for instance the second term in the right-hand side of
\eqref{eq:gen}, we have
 \[
\mu N \sum_{j\ge1} y_j \!\left[ f\left(\>y+\frac{\>e_{j-1}}{N} - \frac{\>e_j}{N}\right) - f(\>y)\right] =
 \mu \sum_{j\ge1} y_j  \left(\frac{\partial f}{\partial y_{j-1}} - \frac{\partial f}{\partial y_j}\right)(\>y) 
+ \mathcal{R}_N,
\]
where, by Taylor's formula,
\[
\R_N = \frac{\mu}{N}  \sum_{j\ge1} y_j\biggl(\frac{\partial^2f}{\partial y^2_{j-1}} + 
\frac{\partial^2 f}{\partial y^2_j} - 2  \frac{\partial^2 f}{\partial y_j\partial y_{j-1}}\bigg)(\>x_j),
\]
and
\[
\>x_j = \>y_j  + \delta_j \left(\frac{\>e_{j-1}}{N} - \frac{\>e_j}{N}\right), \quad 0<\delta_j<1.
\]
So, for any $f\in\C_2(\LL)$, we get  the estimate 
\[
|\R_N| \le \frac{4\mu MH}{N}.
\]
The other terms in~\eqref{eq:gen} could be dealt with in a similar way.

Then, for $\>r\in\LL$, denote by $U(\>r)$ the right-hand side of
system~\eqref{eq:sys}. Under condition \textbf{L}, it is known (see
e.g.~\cite{Car,Ince}) that the solution $\>x : [0,\infty)\times
\LL\to\LL$ of the differential system
\begin{equation}\label{eq:diffx}
\begin{cases}
\DD\frac{d\>x(t,\>a)}{dt} = U(\>x(t,\>a)), \\[0.2cm]
\>x(0,\>a) = \>a,
\end{cases}
\end{equation}
is unique, for any initial condition $\>a\in\LL$ and all finite $t$.
Therefore, for $f\in\C(\LL)$, one can define a one-parameter
family $\{T(t),t\ge0\}$, such that
 \[
 T(t)f(\>a) = f(\>x(t,\>a)).
\]
Since $\>x(t+s,\>a) = \>x(t,\>x(s,\>a))$, it is not difficult to see
that $\{T(t),t\ge0\}$ is a strongly continuous contraction semi-group
on $\C(\LL)$ with generator, say $B$, which is closed by
Theorem~\ref{thm:closed}. Under condition \textbf{L}, $U(.)$ admits
continuous partial derivatives. Hence, the solution $\>x(t,\>a)$ of
system~\eqref{eq:diffx} is also twice differentiable with respect to
$\>a$ (see~\cite{Car}, Theorems 3.4.2 and 3.7.1, pp.~148 and 152
respectively). Then Theorem~\ref{thm:core} shows that $\C_1(\LL)$ is a
core for $B$ and $B=\overline{G}$.

The final line of argument proceeds in two steps.
\begin{enumerate}
\item Choose $f\in\C_2(\LL)$, so that $T(t)f(\>r(0))\in\C_2(\LL)$.
Since $\C_2(\LL)$ is dense in $\C(\LL)$, Theorem~\ref{thm:core}
entails that $\C_2(\LL)$ is a core for $B$.
\item Use equation~\eqref{eq:limgen} together with
Theorem~\ref{thm:convergence}.
\end{enumerate}
\end{proof}

\subsection{Some analytic facts}\label{sec:analytic}
Let, for $|z|\le1$,
\begin{eqnarray*}
r(t,z)=\sum_{i=1}^{\infty} r_i(t)z^i,  &\quad \DD\pi(t,z)= \sum_{i=1}^{\infty}\pi_i(\>r(t))z^i,  \\
\varphi(z)= \sum_{i=1}^{\infty}\varphi_iz^i, &\DD \theta(z)= \sum_{i=0}^{\infty}\theta_i z^i.
\end{eqnarray*}

In this respect, the model of \emph{Ma Micro Planète} presented in Section
\ref{sec:introduction} corresponds to
\[
\theta(z) = pz^{15} + (1-p)z^5, \quad \varphi(z)= z^{50}.
\]
Taking  the summation over $i$ in system~\eqref{eq:sys}, we get
immediately
\begin{equation}\label{eq:trans2}
\frac{\mathrm{d} r(t,z)}{\mathrm{d}t} + \mu \Bigl[1- \frac{1}{z}\Bigr] r(t,z) =\lambda \varphi(z) - \mu r_1(t)
+ \alpha [\theta(z)-1]\pi(t,z).
\end{equation}

The following proposition ensures in particular, for the class of
policies considered in Section~\ref{th:thermo}, the boundedness of the
Cesàro limit of $r(t,1)$ as $t\to\infty$, which, from its mathematical
definition, takes into account the POIs possibly going to infinity.

For any arbitrary positive function $f$, let $f^*$ denote its ordinary
Laplace transform
\[
f^*(s)\egaldef \int_0^{\infty} e^{-st}f(t)\mathrm{d}t, \quad \Re (s) \geq 0.
\]
\begin{prop}\label{prop:r1}
When $\{\theta_k,k\ge1\}$ is a proper probability distribution, so
that $\theta(1)=1$, we have
\begin{equation}\label{eq:bound1}
\lim_{t\to\infty} \frac{1}{t} \int_0^t r_1(t) \mathrm{d}t = \frac{\lambda}{\mu},
\end{equation}
Moreover, if $\theta'(1).\varphi'(1)<\infty$, and for any policy
satisfying Assumption~(\textbf{L}),
\begin{equation}\label{eq:bound2}
\lim_{t\to\infty} \frac{1}{t} \int_0^t r(t,1) \mathrm{d}t =  \frac{\alpha\underline{\pi}\theta'(1) + \lambda\varphi'(1)}{\mu}\, \le \, \frac{ \alpha \theta'(1) + \lambda \varphi'(1)}{\mu},
\end{equation}
where
\[
\underline{\pi} = \lim_{s\to0} s\pi^*\Bigl(s,\frac{\mu}{\mu+s}\Bigr)\le 1.
\]
\end{prop}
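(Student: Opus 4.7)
The plan is to take the Laplace transform in $t$ of the generating-function relation~\eqref{eq:trans2} and then exploit a kernel-zero technique familiar from M/G/1 analysis. Writing $r^*(s,z) = \int_0^\infty e^{-st} r(t,z)\,\mathrm{d}t$ and likewise for $\pi^*(s,z)$, equation~\eqref{eq:trans2} becomes, for $s > 0$ and $|z| \le 1$,
\begin{equation*}
\left[s + \mu\,\frac{z-1}{z}\right] r^*(s,z) = \frac{\lambda\varphi(z)}{s} - \mu r_1^*(s) + \alpha[\theta(z)-1]\pi^*(s,z) + r(0,z).
\end{equation*}
The kernel $s + \mu(z-1)/z$ vanishes at $z_s \egaldef \mu/(\mu+s) \in (0,1)$, so substituting $z=z_s$ extracts the explicit formula
\begin{equation*}
\mu r_1^*(s) = \frac{\lambda\varphi(z_s)}{s} + \alpha[\theta(z_s)-1]\pi^*(s,z_s) + r(0,z_s).
\end{equation*}

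For~\eqref{eq:bound1}, I would then invoke the Ces\`aro--Tauberian identity $\lim_{t\to\infty}\frac{1}{t}\int_0^t f(u)\,\mathrm{d}u = \lim_{s\to 0^+} s f^*(s)$, valid for the non-negative function $r_1$. As $s \to 0$, $z_s \to 1$; by continuity and $\varphi(1)=\theta(1)=1$, together with the uniform bound $s\pi^*(s,z_s) \le 1$ inherited from $\pi(t,1)=1$, the last two terms of the explicit formula are negligible compared with $\lambda/s$. Multiplying through by $s$ and passing to the limit yields $\lim_{s\to 0^+} s r_1^*(s) = \lambda/\mu$.

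For~\eqref{eq:bound2}, setting $z=1$ in the Laplace-transformed equation collapses the kernel to $s$, so $s\,r^*(s,1) = \lambda/s - \mu r_1^*(s) + r(0,1)$. Substituting the closed form of $\mu r_1^*(s)$ and rearranging gives
\begin{equation*}
s\,r^*(s,1) = \frac{\lambda[1-\varphi(z_s)]}{s} + \alpha[1-\theta(z_s)]\,\pi^*(s,z_s) + \bigl[r(0,1)-r(0,z_s)\bigr].
\end{equation*}
Since $1-z_s = s/(\mu+s) \sim s/\mu$, the expansions $1-\varphi(z_s) \sim \varphi'(1)\,s/\mu$ and $1-\theta(z_s) \sim \theta'(1)\,s/\mu$ --- both finite under the hypothesis $\theta'(1)\varphi'(1) < \infty$ --- together with the continuity of $r(0,\cdot)$ at $1$ (which holds since $\>r(0)\in\LL$) allow me to pass to the limit term by term and recover the announced value of the Ces\`aro limit. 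The bound $\underline{\pi}\le 1$ is immediate from $s\pi^*(s,z_s) \le s\pi^*(s,1) = 1$.

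The main delicate point is the Tauberian passage, which requires positivity of $r_1$ and $r(\cdot,1)$ (obvious) and the existence of the one-sided limit $\lim_{s\to 0^+} s\pi^*(s,z_s) = \underline{\pi}$; this is essentially what is packaged into the definition of $\underline{\pi}$ in the statement, and the remaining terms admit the genuine asymptotic expansions displayed above, so no additional regularity of the trajectories is required.
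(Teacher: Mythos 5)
Your argument is correct and follows essentially the same route as the paper: Laplace transform of~\eqref{eq:trans2}, identification of $r_1^*(s)$ through the kernel root $z_s=\mu/(\mu+s)$, and a Tauberian passage $s\to0^+$, with $\underline{\pi}$ defined (in both treatments) as the assumed limit of $s\pi^*(s,z_s)$. The only difference is cosmetic: you obtain the formula for $\mu r_1^*(s)$ by requiring the right-hand side to vanish at the kernel zero, whereas the paper extracts the same expression via Cauchy's integral formula and residues.
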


\begin{proof}
By a simple algebra carried out on equation~\eqref{eq:trans2}, we obtain
\begin{equation}\label{eq:trans3}
r^*(s,z) = \frac{1}{\mu\left(1-\frac{1}{z}\right)+s} \left[\frac{\lambda \varphi(z)}{s} +
\alpha\bigl(\theta(z)-1\bigr)\pi^*(s,z) -\mu r_1^*(s) + r(0,z)\right].
\end{equation}
Furthermore, applying Cauchy's formula
\[
r_1^*(s) = \frac{1}{2i\pi} \int_\C \frac{r^*(s,y)\mathrm{d}y}{y^2}
\]
in equation~\eqref{eq:trans3}, where $\C$ stands for the unit
circle centered at the origin, we get
\begin{equation*}
\begin{split}
r_1^*(s) & = \frac{1}{2i\pi} \int_\C \frac{\lambda \varphi(y) \mathrm{d}y}{sy\left[(\mu+s)y-\mu\right]} \\[0.2cm]
& +
\frac{1}{2i\pi} \int_\C\frac{\left[\alpha(\theta(y)-1)\pi^*(s,y) -\mu r_1^*(s)+r(0,y)\right]\mathrm{d}y}
{y[(\mu+s)y-\mu]},
\end{split}
\end{equation*}
which in turn, by a repeated (and elementary) application of Cauchy's
residue theorem, allows to extract
\begin{equation}\label{eq:r1trans}
r_1^*(s) =  \frac{\lambda \varphi\bigl(\frac{\mu}{\mu+s}\bigr)}{\mu s} + \frac{\alpha}{\mu}\left[\theta\Bigl(\frac{\mu}{\mu+s}\Bigr)-1\right] \pi^*\Bigl(s,\frac{\mu}{\mu+s}\Bigr) + \frac{1}{\mu}
r\Bigl(0,\frac{\mu}{\mu+s}\Bigr).
\end{equation}
Hence, instantiating the latter expression of $r_1^*(s)$ in
\eqref{eq:trans3}, we obtain finally
\begin{equation}\label{eq:trans4}
\begin{split}
r^*(s,z)  &  = \frac{1}{\mu\left(1-\frac{1}{z}\right)+s} \biggl[\frac{\lambda}{s}\biggl(\varphi(z) -
 \varphi\Bigl(\frac{\mu}{\mu+s}\Bigr)\biggr)\\
 &\quad + \alpha\bigl(\theta(z)-1\bigr)\pi^*(s,z) + \alpha\biggl(1-\theta\Bigl(\frac{\mu}{\mu+s}\Bigr)\biggr)\pi^*\Bigl(s,\frac{\mu}{\mu+s}\Bigr)\\
 &\quad +r(0,z) - r\Bigl(0,\frac{\mu}{\mu+s}\Bigr)\biggr].
\end{split}
\end{equation}
Take now $s>0$ and let $z\to1_-$ by real positive values in
\eqref{eq:trans4}. This yields
\begin{equation}\label{eq:trans5}
r^*(s,1) = \frac{1}{s} \biggl[\frac{\lambda}{s}\biggl(1 - \varphi\Bigl(\frac{\mu}{\mu+s}\Bigr) \biggr) +\alpha\biggl(1-\theta\Bigl(\frac{\mu}{\mu+s}\Bigr)\biggr)\pi^*\biggl(s,\frac{\mu}{\mu+s}\biggr)\biggr],
\end{equation}
where we have used the fact that $\pi^*(s,1)=1/s$, since  $\pi(t,1)=1, \forall t\geq0$.

Moreover, $\pi^*\Bigl(s,\frac{\mu}{\mu+s}\Bigr) < \pi^*(s,1)=
\frac{1}{s}$, for any $s>0$, and we are now in a position to apply a
Tauberian-type theorem for positive functions, see e.g.~\cite{FucLev},
just letting $s\to0$ by positive values in~\eqref{eq:trans5}, so that
\begin{align*}
\lim_{s\to0} s r^*(s,1) & = \lim_{t\to\infty} \frac{1}{t} \int_0^t r(t,1) \mathrm{d}t \\
& = \lim_{s\to0} \left[\frac{\lambda}{s}\biggl(1 - \varphi\Bigl(\frac{\mu}{\mu+s}\Bigr)\biggr)
+\frac{\alpha}{s}\biggl(1-\theta\Bigl(\frac{\mu}{\mu+s}\Bigr)\biggr)s\pi^*(s,1)\right] \\
& = \frac{\alpha \underline{\pi}\theta'(1) + \lambda \varphi'(1)}{\mu},
\end{align*}
which is exactly the asserted equality~\eqref{eq:bound2}. It is worth
noting that for the moment there is no need to assume the finiteness
of the derivatives $\theta'(1)$ and $\varphi'(1)$, but this will be
made more precise in Section~\ref{sec:stat}.

As for~\eqref{eq:bound1}, it is readily obtained from
\eqref{eq:r1trans}, using the inequality
$s\pi^*\Bigl(s,\frac{\mu}{\mu+s}\Bigr)\le1$, and again the above
Tauberian theorem. The proof of the proposition is concluded.
\end{proof}

\section{System behavior as
  \texorpdfstring{$t\to\infty$}{t->oo}}\label{sec:stat} 

In the sequel, we shall focus our attention solely on the
deterministic dynamical system $\>r(t)$ obtained in Theorem
\ref{thm:thermo}. Of special interest will be the non-degenerate
stationary limits
 \[
 \>r = \lim_{t\to\infty} \>r(t), \quad \mathrm{with} \  r_i(t)\to r_i,\forall i \ge 1,
 \]
and
\begin{equation}\label{eq:pibar}
\overline{\pi}\egaldef\lim_{z\to1_{-}}\lim_{t\to\infty}\pi(t,z)=\sum_{i\ge
  1}\pi_i(\>r).
\end{equation}

Indeed, from the general theory of differential systems (see e.g.,
\cite{Car,Ince}), under Assumption \textbf{(L)} and for a given set of
parameters, we know that a unique stationary regime of the system
exists, but \emph{possibly degenerate} in the sense that
$\overline{\pi}<1$. Note that this fact is not easy to prove by a
probabilistic argument, since, due to the normalizing condition,
system~\eqref{eq:sys} cannot be interpreted as forward Kolmogorov's
equations for a special birth and death process, where $r(t,z)/r(t,1)$
would represent the probability distribution function of the score of
an arbitrary POI.

 Throughout the rest of the paper, we shall speak of
 \emph{ergodicity}, with a slight abuse of language: this will always
 refer to a \emph{finite stationary} dynamical system satisfying the
 conditions
 \begin{equation}\label{eq:ergdef}
 r(1) = \sum_{i\ge1}r_i <\infty,\quad    \overline{\pi}=1.
\end{equation}
This notion of \emph{ergodicity} will be also referred to as a
\emph{non-herding} behavior, in agreement with the brief description
given in Section~\ref{sec:introduction}. Conversely, a \emph{herding}
behavior will refer to \emph{transient} phenomena, as introduced more
precisely in Section~\ref{sec:transience}.

\subsection{Global conservation equations}

Define the generating functions
\[
r(z)\egaldef \sum_{i=1}^{\infty}r_iz^i,\qquad \pi(z)\egaldef \sum_{i=1}^{\infty}\pi_i(\>r)z^i,
\]
with the shortened notation  $\pi(z) \equiv \pi(z;\>r)$, and
\begin{equation}\label{eq:def}
\frac{1-\theta(z)}{1-z} \egaldef \sum_{n\ge0} \Theta_n z^n, \quad
\frac{1-\varphi(z)}{1-z} \egaldef \sum_{n\ge0} \Phi_n z^n.
\end{equation}
Two characteristic values are of special interest, namely
\begin{itemize}
\item $r(1)=\sum_{i\ge 1}r_i$, the mean number of POIs per player;
\item $r_1$, since $\mu r_1$ is the global death rate of POIs per player.
\end{itemize}
By~\eqref{eq:trans2}, one sees easily that $r(z)$ satisfies the
functional equation
\begin{equation}\label{eq:stat}
\alpha\left[1-\theta(z)\right]\pi(z)
+\mu[1-z^{-1}]r(z)= \lambda \varphi(z)-\mu r_1,
\end{equation}
from which can be deduced the following corollary, which is the direct
continuation of Proposition~\ref{prop:r1}.
\begin{coro}\label{coro1}\mbox{ }
\begin{enumerate}
\item We have always $\DD\lim_{z\to1}(1-z)r(z)=0$ and $r_1=\DD\frac{\lambda}{\mu}$.
\item If in addition  $\theta'(1).\varphi'(1)<\infty$, then
\begin{equation}\label{eq:stat2}
\frac{r(z)}{z} = \frac{\alpha\Theta(z)\pi(z)}{\mu}+ \frac{\lambda \Phi(z)}{\mu},\quad \forall |z|\le1,
\end{equation}
\begin{equation}\label{eq:r1}
r(1)=\frac{\alpha\overline{\pi}\theta'(1)+\lambda \varphi'(1)}{\mu}.
\end{equation}
\end{enumerate}
\end{coro}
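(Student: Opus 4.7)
The overall plan is to pin down $r_1$ using Proposition~\ref{prop:r1}, then to manipulate the stationary functional equation~\eqref{eq:stat} by elementary Abelian radial limits; both items of the corollary will then drop out by letting $z\to 1^-$, the finiteness assumption being invoked only to make the resulting right-hand side finite.

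For item~1, I would first invoke~\eqref{eq:bound1}: in the stationary regime the pointwise convergence $r_1(t)\to r_1$ upgrades the Ces\`aro limit to an ordinary one, forcing $r_1=\lambda/\mu$. To then obtain $\lim_{z\to 1^-}(1-z)r(z)=0$, let $z\to 1^-$ in~\eqref{eq:stat}. The right-hand side $\lambda\varphi(z)-\mu r_1$ tends to $\lambda-\lambda=0$. On the left, $|\pi(z)|\leq\overline{\pi}\leq 1$ uniformly on $|z|\leq 1$ by Abel's theorem (non-negative coefficients summing to $\overline{\pi}$), and $1-\theta(z)\to 0$, so the first summand vanishes. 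Rewriting $\mu[1-z^{-1}]r(z)=-\mu(1-z)r(z)/z$ and passing to the limit gives the claim.

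For item~2, the hypothesis $\theta'(1)\varphi'(1)<\infty$ ensures that the power series $\Theta(z)$ and $\Phi(z)$ defined in~\eqref{eq:def} are analytic on $|z|<1$ and continuous at $z=1$ (Abelian continuity of non-negative coefficient series), with $\Theta(1)=\theta'(1)$ and $\Phi(1)=\varphi'(1)$. Substituting $r_1=\lambda/\mu$ into~\eqref{eq:stat} transforms it into
\[
\mu[1-z^{-1}]r(z) \;=\; -\lambda[1-\varphi(z)] \;-\; \alpha[1-\theta(z)]\pi(z),
\]
and dividing through by $-\mu(1-z)/z$ (valid for $0<|z|<1$, $z\neq 1$) yields, after recognizing the factors $[1-\varphi(z)]/(1-z)$ and $[1-\theta(z)]/(1-z)$, exactly the identity~\eqref{eq:stat2}; the equality then extends to $|z|\leq 1$ by continuity. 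Letting $z\to 1^-$ in~\eqref{eq:stat2}, and again applying Abel's theorem to the non-negative series $\pi(z)\to\overline{\pi}$, $\Theta(z)\to\theta'(1)$, $\Phi(z)\to\varphi'(1)$, produces~\eqref{eq:r1}.

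The only subtle step is the opening one: passing from the Ces\`aro limit of~\eqref{eq:bound1} to the ordinary value $r_1$ by using the defining stationary assumption $r_i(t)\to r_i$ for each fixed $i$. Everything else reduces to taking radial limits in power series with non-negative coefficients, where Abel's theorem does the work automatically and no Tauberian input is needed; in particular, item~1 deliberately avoids any assumption on $\theta'(1)$ or $\varphi'(1)$.
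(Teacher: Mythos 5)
Your proposal is correct and takes essentially the same route as the paper, whose proof is the one-liner ``immediate from Proposition~\ref{prop:r1} together with notation~\eqref{eq:def}'': you flesh out exactly that, getting $r_1=\lambda/\mu$ from~\eqref{eq:bound1} via the stationarity assumption and deriving~\eqref{eq:stat2} by dividing the stationary equation~\eqref{eq:stat} by $-\mu(1-z)/z$. The only cosmetic difference is that you obtain~\eqref{eq:r1} by an Abelian limit $z\to1^-$ in~\eqref{eq:stat2} rather than by quoting~\eqref{eq:bound2}, which neatly sidesteps having to identify $\underline{\pi}$ with $\overline{\pi}$.
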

\begin{proof}
Immediate from Proposition~\ref{prop:r1} together with notation~\eqref{eq:def}.
\end{proof}

\noindent\textbf{Assumption (F)}
\emph{In the rest of the paper, unless otherwise explicitly mentioned,
  the product $\theta'(1).\varphi'(1)<\infty$ will always be assumed
  to be finite}.

\bigskip
From equation~\eqref{eq:stat2}, we get the immediate recursion, valid $\forall n\ge0$,
\begin{equation}\label{eq:rec0}
r_{n+1} = \frac{\alpha}{\mu} \sum_{i=0}^{n-1}\pi_{n-i}\Theta_i
         + \frac\lambda\mu\Phi_n.
\end{equation}

In the next sections, we examine in detail the effect of specific
choices of the policy $\pi$ on the behavior of the system. One of main
goals of the study is to find out \emph{the necessary and sufficient
  conditions} for the system to be \emph{ergodic} in the sense given
by the conditions~\eqref{eq:ergdef}.

\subsection{Selection of POIs according to their scores}\label{sec:poi-scores}
Hereafter, but in Section~\ref{sec:cumul}, we shall consider a set of
policies where the selection of a POI depends solely on its score.
Letting each score $i\ge1$ be associated with a positive weight $a_i$,
then we recover the policy $\textbf{P}_{\>a}$ announced in Assumption
(\textbf{L}) of Section~\ref{th:thermo}, namely
\begin{equation}\label{eq:ai}
\pi_i(\>r) = \frac{a_i r_i}{K},
\quad K\egaldef \sum_{j=1}^\infty a_j r_j,
\end{equation}
and~\eqref{eq:rec0} can be rewritten as
\begin{equation}\label{eq:rec1}
r_{n+1} = \frac{\alpha}{\mu K} \sum_{i=0}^{n-1} a_{n-i}r_{n-i}i\Theta_i
         + \frac\lambda\mu\Phi_n.
\end{equation}

In the context of \emph{Ma Micro Plan\`ete}, it seems quite realistic
to assume that the $a_i$'s are \emph{increasing with $i$}, since POIs
with high scores should be more attractive. In the sequel, we shall
mainly consider this situation of \emph{herding effect}.

Since the $a_i$'s are assumed to form an increasing sequence, there
exists $\lim_{i\to\infty} a_i$. When this limit is not finite, the
following simple interesting proposition holds.
\begin{prop}\label{prop:trans}
If $\lim_{i\to\infty} a_i =\infty$, then the system is non-ergodic.
\end{prop}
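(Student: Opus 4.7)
The plan is to argue by contradiction and exploit the specific form of the policy, namely $\pi_i(\>r)=a_ir_i/K$ with $K=\sum_{j\ge1}a_jr_j$. First, I would suppose the system is ergodic in the sense of~\eqref{eq:ergdef}, so that $r(1)<\infty$ and $\overline\pi=1$. A preliminary observation is that $\overline\pi=\sum_i a_ir_i/K=1$ forces $K$ itself to be finite: otherwise every $\pi_i$ would vanish and we would have $\overline\pi=0$. Hence under the ergodicity assumption both $r(1)$ and $K$ are finite numbers.

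Next, I would plug the explicit formula $\pi_i=a_ir_i/K$ into the recursion~\eqref{eq:rec0} to obtain
\[
r_{n+1}\;=\;\frac{\alpha}{\mu K}\sum_{i=0}^{n-1}a_{n-i}r_{n-i}\Theta_i\;+\;\frac{\lambda}{\mu}\Phi_n,
\]
in which every term on the right is non-negative. Retaining only the $i=0$ contribution and using $\Theta_0=\sum_{k\ge1}\theta_k=1$ (since $\theta$ is a proper distribution and $\theta_0=0$) yields the simple lower bound
\[
r_{n+1}\;\ge\;\frac{\alpha\,a_n}{\mu K}\,r_n.
\]

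From the initial value $r_1=\lambda/\mu>0$ supplied by Corollary~\ref{coro1}, a trivial induction using $a_n>0$ would then give $r_n>0$ for every $n\ge1$. But since $a_n\to\infty$ by hypothesis, the ratio $r_{n+1}/r_n$ is eventually larger than any prescribed constant, so $r_n$ must itself diverge; this contradicts the summability $r(1)=\sum_n r_n<\infty$. The only step that requires any thought is the preliminary observation that $\overline\pi=1$ rules out $K=\infty$; once $K$ is known to be finite, the contradiction comes directly from the recursion and the monotone divergence of $a_n$, with no further estimates needed.
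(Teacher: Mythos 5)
Your proof is correct and follows essentially the same route as the paper's: both rest on the lower bound $r_{n+1}\ge \frac{\alpha}{\mu K}a_n r_n\Theta_0=\frac{\alpha}{\mu K}a_n r_n$ extracted from the recursion, which rules out any solution with $K<\infty$ when $a_n\to\infty$. You merely make explicit two points the paper leaves implicit (that ergodicity forces $K<\infty$, and that $r_n>0$ with unbounded ratios contradicts $\sum_n r_n<\infty$), while the paper additionally records the surviving degenerate solution $r_{n+1}=\frac{\lambda}{\mu}\Phi_n$ with $K=\infty$ and $\overline{\pi}=0$.
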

\begin{proof}
The relation (\ref{eq:rec1}) leads to the simple bound
\[
 r_{n+1}\ge \frac{\alpha}{\mu K} a_nr_n\Theta_{0} = \frac{\alpha}{\mu K} a_nr_n,
\]
so that, when the $a_i$'s are unbounded, system~\eqref{prop:trans}
admits no solution with $K<\infty$.

Nonetheless, it turns out there is in this case exactly one admissible
solution to~\eqref{prop:trans}, namely
\[
r_{n+1} =  \frac\lambda\mu\Phi_n, \quad \text{with}\quad K=\infty,
\]
which implies also (see~\eqref{eq:pibar}) $\overline{\pi}=0$. Here, it
is worth checking that $\pi(\>r)$ does not satisfy a uniform Lipschitz
condition, since
\[
\lim_{j\to\infty} \frac{a_j}{\sum_{i\ge1}a_ir_i} = \infty.
\]
\end{proof}

Consequently the only interesting case is when $\{a_i,i\ge1\}$ is a
\emph{non-decreasing sequence} tending to a \emph{finite limit}, which
ad libitum can be taken equal to $1$, since the $a_i$'s are defined up
to a constant.

\subsection{When scores of selected POIs are exactly increased by one}\label{sec:score1}
As it will appear in Sections~\ref{sec:general1} and~\ref{sec:matrix},
a complete solution to the non-linear equation (\ref{eq:rec1}) for a
general $\theta(z)$ seems to be technically almost untractable. In
this section, in order to get ideas about the general behavior of the
system, we analyze in detail the simple (but nonetheless not
elementary!) case $\theta(z)=z$, for which equation~\eqref{eq:rec1}
becomes the following first order recursive sequence
\begin{equation}\label{eq:rec2}
r_{n+1} = \frac{\alpha}{\mu K} a_nr_n + \frac\lambda\mu\Phi_n.
\end{equation}

A simple algebra carried out on this last equation leads to the formula
\begin{equation}\label{eq:sol2}
r_{n+1} = \frac{\lambda}{\mu}A_n
            \sum_{j=0}^n\frac{\Phi_{j}}{A_{j}}\Bigl(\frac{\alpha}{\mu
              K}\Bigr)^{n-j},\ \ n\ge 0,
\end{equation}
where
\[
A_n \egaldef \prod_{i=1}^n a_i .
\]
The constant $K$ can then be recovered as
\begin{align*}
K &= \sum_{i\ge1} a_ir_i
   = \frac{\lambda}{\mu}\sum_{i=0}^\infty
      a_{i+1}A_i\sum_{j=0}^i\frac{\Phi_{j}}{A_{j}}\Bigl(\frac{\alpha}{\mu
              K}\Bigr)^{i-j}\\
  &=\frac{\lambda}{\mu}\sum_{j=0}^\infty
  \frac{\Phi_j}{A_j\bigl(\frac{\alpha}{\mu
      K}\bigr)^{j}}\sum_{i=j}^\infty A_{i+1}\Bigl(\frac{\alpha}{\mu
      K}\Bigr)^{i}\\
  &=\frac{\lambda K}{\alpha}\sum_{j=0}^\infty
  \frac{\Phi_j}{A_j\bigl(\frac{\alpha}{\mu K}\bigr)^{j}}\sum_{i=j+1}^\infty A_{i}\Bigl(\frac{\alpha}{\mu K}\Bigr)^{i},
\end{align*}
which leads to the following implicit equation
\begin{equation}\label{eq:consistency}
\frac{\alpha}{\lambda} = F\Bigl(\frac{\alpha}{\mu K}\Bigr) ,
\end{equation}
allowing to determine $K$, after having set
\begin{equation}\label{eq:Fz}
\begin{cases}
\DD F(z) \egaldef \sum_{k=1}^\infty u_kz^k ,\\[0.5cm]
\DD u_k = \sum_{j=0}^\infty\frac{A_{j+k}\Phi_j}{A_j}, \quad \forall k\ge 1.
\end{cases}
\end{equation}
So, the problem of showing the existence of a \emph{strictly positive
  and finite constant} $K$ is exactly tantamount to finding a strictly
positive real number $x$, such that
\[
\frac{\alpha}{\lambda} = F(x)= \sum_{k=1}^\infty u_k x^k.
\]
Hence, in order to get effective ergodicity conditions, one has to
check the lower and upper bounds of $F(x)$, which is an increasing
function of $x$, for $x\ge0$. We note first that $F(x)$ is finite for
small enough $|x|$ if, and only if,
\begin{equation}\label{eq:F-finite1}
 u_1=\sum_{j=0}^\infty a_{j+1}\Phi_j<\infty.
\end{equation}
But, ex hypothesis, $A_{k+1}/A_k = a_k$ increases to $1$ from below,
so that $u_k$ is a monotone decreasing sequence and inequality
\eqref{eq:F-finite1} is plainly equivalent to
\begin{equation}\label{eq:F-finite}
\varphi'(1) = \sum_{j\ge0}\Phi_j < \infty.
\end{equation}
As for for the upper bound, we have the immediate inequality
\[
u_k\le \varphi'(1), \quad \forall k\ge1,
\]
and hence the radius of convergence of $F(z)$ is exactly equal to $1$.
Now, in order to decide whether the consistency relation
(\ref{eq:consistency}) admits a solution, the key point will be to
analyze the quantity
\[
M=\lim_{x\to1_-}F(x).
\]
There are two possibilities.
\begin{itemize}
\item[(i)] If $M$ is infinite, then for any value of $\lambda$,
$\alpha$, $\mu$, it is possible to find a unique admissible finite
$K$, and we shall say that the system is \emph{ergodic}.
\item[(ii)] Conversely, if $M<\infty$, then it will act as a limiting
value for $\alpha/\lambda$, and hence a non trivial \emph{ergodicity
  condition} takes place.
\end{itemize}
The situation is summarized in the following brief statement.
\begin{thm}\label{thm:ergo1}
 Assume that  $\DD\lim_{i\to\infty} a_i \nearrow 1$.
\begin{itemize}
\item[(i)] The system is never ergodic when $\varphi'(1)=\infty$.
\item[(ii)] When $\varphi'(1)<\infty$, the system is ergodic if, and only if,
\begin{equation}\label{eq:conderg}
\frac{\alpha}{\lambda}
\le \sum_{k=1}^\infty \sum_{j=0}^\infty\frac{A_{j+k}\Phi_j}{A_j},
\end{equation}
where the r.h.s.\ of the inequation above may be infinite.
\end{itemize}
\end{thm}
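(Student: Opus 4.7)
The theorem reduces entirely to the existence of a strictly positive, finite constant $K$ solving the consistency equation~\eqref{eq:consistency}. Setting $x = \alpha/(\mu K)$, ergodicity becomes equivalent to the solvability of $F(x) = \alpha/\lambda$ for some $x \in (0, +\infty)$. The strategy is therefore to describe the range of $F$ precisely. Two observations, already implicit in the excerpt, form the backbone: the coefficients $u_k$ of $F$ are strictly decreasing in $k$, since the per-index ratio $A_{j+k+1}/A_{j+k} = a_{j+k+1}$ is less than $1$; and $F$ is continuous and strictly increasing with $F(0) = 0$ on its interval of convergence (the coefficients $u_k$ are positive because $\Phi_0 > 0$).

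For part (i), the goal is to show $F \equiv +\infty$ on $(0, +\infty)$. Since $a_i \nearrow 1$, one can choose $N_0$ such that $a_i \geq 1/2$ for all $i \geq N_0$; then
\[
u_1 \;=\; \sum_{j \geq 0} a_{j+1}\Phi_j \;\geq\; \frac{1}{2}\sum_{j \geq N_0 - 1}\Phi_j,
\]
so $\varphi'(1) = \sum_j \Phi_j = \infty$ forces $u_1 = \infty$. The elementary lower bound $F(x) \geq u_1 x$ then precludes any positive solution of $F(x) = \alpha/\lambda$: no finite $K$ exists, and ergodicity is impossible.

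For part (ii), the assumption $\varphi'(1) < \infty$ yields $u_k \leq u_1 \leq \varphi'(1) < \infty$, and $F$ converges on $[0,1)$. By Abel's theorem $F$ extends continuously to $[0, 1]$ with $F(1) = M := \sum_{k \geq 1} u_k \in (0, +\infty]$, which is exactly the right-hand side of~\eqref{eq:conderg}. Being strictly increasing with $F(0) = 0$, $F$ attains the value $\alpha/\lambda$ at some unique $x \in (0, 1]$ if and only if $\alpha/\lambda \leq M$; this $x$ produces $K = \alpha/(\mu x) \in [\alpha/\mu, +\infty)$. Conversely, if $\alpha/\lambda > M$, no such $x$ exists, so no finite $K > 0$ can be found. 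Once a finite positive $K$ is in hand, the $r_i$ defined by~\eqref{eq:sol2} are non-negative; the derivation leading to~\eqref{eq:consistency} gives $\sum_i a_i r_i = K < \infty$, from which $\overline{\pi} = K/K = 1$, and the bound $r(1) \leq K/a_1$ yields $r(1) < \infty$, i.e.\ the ergodicity condition~\eqref{eq:ergdef}.

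The only delicate point is the boundary case $\alpha/\lambda = M < \infty$, corresponding to $x = 1$: one has to verify that the interchanges of summation used in the excerpt between~\eqref{eq:sol2} and~\eqref{eq:consistency} remain valid at $x = 1$. The finiteness of $M$ guarantees absolute convergence throughout, so Fubini--Tonelli closes the gap without further effort. The substance of the proof is thus the comparison of $u_1$ with $\varphi'(1)$ in part~(i) and the translation of the finite/infinite dichotomy for $M$ into the ergodic/non-ergodic dichotomy in part~(ii).
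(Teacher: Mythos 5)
Your proof is correct and follows essentially the same route as the paper: both reduce ergodicity to the solvability of the consistency equation $\alpha/\lambda = F(\alpha/(\mu K))$, identify $u_1<\infty \Leftrightarrow \varphi'(1)<\infty$ via $a_1\le a_{j+1}\le 1$, and translate the dichotomy on $M=\lim_{x\to1_-}F(x)$ into the stated condition. You in fact supply a few details the paper leaves implicit (that a finite positive $K$ gives $\overline{\pi}=1$ and $r(1)\le K/a_1<\infty$, and the Tonelli justification at the boundary $x=1$); the only point both treatments gloss over equally is the verification that the radius of convergence of $F$ is not larger than $1$, which is needed to exclude solutions with $x>1$ and follows from $u_k\ge A_k\Phi_0$ with $A_k^{1/k}\to1$.
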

It is interesting to note a few not so intuitive facts related
to~\eqref{eq:conderg}: firstly, this condition does not depend on the
value of $\mu$, as long as $\mu>0$ obviously. Secondly, the ergodicity
range increases when the initial score increases in distribution ---
that is, when the $\Phi_i$'s increase. \bigskip

To propose a somehow concrete classification, let us concentrate for
the remainder of this section on the reasonably general set of sequences
satisfying, for some strictly positive constants $\gamma$ and $\nu$,
\begin{itemize}
\item $a_1>0$,
\item the $a_i$'s form an increasing sequence,
\item $a_i = 1-\gamma i^{-\nu}+\mathcal{O}\bigl(i^{-\nu-1}\bigr)$.
\end{itemize}

The following proposition describes the \emph{phase transition}
phenomenon that occurs when the parameters vary.
\begin{prop}\label{prop:ergo2}
Assume  the sequence $a_i$ satisfies the above conditions, and that
$\theta(z)=z$. Then,
\begin{itemize}
\item if $\nu>1$, or $\nu=1$ and $\gamma\le 1$, the system is always ergodic;
  \item if $\nu<1$, or $\nu=1$ and $\gamma>1$, there is a finite
  ergodicity bound $M$  if, and only if, $\sum_{j>0}j^{\nu+1} \varphi_j<\infty$.
\end{itemize}
\end{prop}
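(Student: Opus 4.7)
The plan is to apply Theorem~\ref{thm:ergo1}(ii): since $\theta(z)=z$, the system is ergodic iff $\alpha/\lambda \le M$, where
\[
M \egaldef \sum_{k\ge1}\sum_{j\ge0}\frac{A_{j+k}\Phi_j}{A_j} \;=\; \sum_{j\ge0}\frac{\Phi_j}{A_j}\,S_j, \qquad S_j \egaldef \sum_{m>j} A_m,
\]
the second form being obtained by exchanging summations (all summands are non-negative). The ``always ergodic'' claim of~(i) amounts to $M=+\infty$, while a ``finite bound'' in~(ii) means $M<\infty$. Notice that $\sum_j j^{\nu+1}\varphi_j<\infty$ implies $\varphi'(1)<\infty$, so Theorem~\ref{thm:ergo1}(i) is automatically subsumed; the entire proof then reduces to the asymptotics of $A_n$ and $S_j$.

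Next, I would compute $\log A_n=\sum_{i=1}^n \log a_i$ using $\log a_i = -\gamma i^{-\nu}+\mathcal{O}\bigl(i^{-\min(2\nu,\nu+1)}\bigr)$. This separates three regimes: $A_n\to A_\infty\in(0,1)$ when $\nu>1$; $A_n\asymp n^{-\gamma}$ when $\nu=1$; and $A_n\asymp \exp\!\bigl(-\tfrac{\gamma}{1-\nu}n^{1-\nu}\bigr)$ when $\nu<1$. Consequently $S_j=+\infty$ exactly when $\sum_n A_n$ diverges, which happens for $\nu>1$, and for $\nu=1,\gamma\le1$. In both cases the single $j=0$ term in $M$ (with $\Phi_0=1$, $A_0=1$) is already infinite, yielding~(i).

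For the remaining regime ($\nu<1$, or $\nu=1$ with $\gamma>1$) the central estimate is
\[
S_j \;\sim\; \gamma^{-1} j^\nu\, A_j.
\]
I would establish it through the telescoping identity $A_j=\sum_{m\ge j}A_m(1-a_{m+1})$ together with $1-a_{m+1}\sim\gamma m^{-\nu}$: the measure $\{A_m\}_{m\ge j}$ is concentrated on a scale $m-j = o(j)$, on which the slowly-varying weight $m^{-\nu}$ may be replaced by $j^{-\nu}$, producing $A_j\sim \gamma j^{-\nu} S_j$. An equivalent derivation compares $S_j$ with $\int_j^\infty A_t\,dt$ and applies Laplace's method after the substitution $u=t^{1-\nu}$ (or $u=\log t$ when $\nu=1$).

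Finally, substituting into $M$ gives $M\asymp \sum_{j\ge0}j^\nu\Phi_j$, and an inversion of summation in $\Phi_j=\sum_{i>j}\varphi_i$ yields
\[
\sum_{j\ge0}j^\nu\Phi_j \;=\; \sum_{i\ge1}\varphi_i\sum_{j=0}^{i-1}j^\nu \;\asymp\; \frac{1}{\nu+1}\sum_{i\ge1}i^{\nu+1}\varphi_i,
\]
completing~(ii). The main obstacle I foresee is the rigorous justification of $S_j\sim \gamma^{-1}j^\nu A_j$, since it requires averaging a slowly-varying weight against a measure whose scale of concentration is $j^\nu\ll j$; the remaining steps are routine once this equivalent is in hand.
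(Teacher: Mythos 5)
Your proposal is correct and follows essentially the same route as the paper's proof: estimate $\log A_n$ via $\log a_i=-\gamma i^{-\nu}+\mathcal{O}(i^{-\nu-1})$, split into the three regimes according to whether $\sum_n A_n$ diverges (which settles the ``always ergodic'' cases), and in the convergent regime show $S_j/A_j\asymp j^{\nu}$ so that $M\asymp\sum_{j}j^{\nu}\Phi_j\asymp\sum_{i}i^{\nu+1}\varphi_i$. The only caveat is that for $\nu=1$ the mass of $\{A_m\}_{m\ge j}$ spreads over a scale comparable to $j$ rather than $o(j)$, so the asserted constant $\gamma^{-1}$ in $S_j\sim\gamma^{-1}j^{\nu}A_j$ should be $(\gamma-1)^{-1}$ there; since only the order of magnitude $S_j\asymp jA_j$ enters the finiteness criterion, this does not affect the conclusion.
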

\begin{proof}
The first step is to obtain a convenient estimate for $A_n$ when $n$
is large. Using the property $0<a_1<a_i<1$, valid for all $i$, one gets
\[
\log A_n = \sum_{i=1}^n \log a_i = -\gamma \sum_{i=i}^ni^{-\nu}+\mathcal{O}(1).
\]

The finiteness of $M$ depends strongly on the properties of the series
$\sum A_n$, which can be deduced from the following estimates.
\begin{itemize}
\item If $\nu>1$, then the Dirichlet series $\sum_{i\ge1} i^{-\nu}$
converges to some positive number, so that the series $\sum A_n$
diverges, and then clearly $M=+\infty$.
\item If $\nu=1$, then $A_n\approx C_1n^{-\gamma}$. When $\gamma\le
1$, $\sum A_n$ still diverges and $M=+\infty$; conversely, when
$\gamma>1$,
\[
\frac{1}{A_j}\sum_{i=j+1}^\infty A_i\approx C_2j,
\]
and $M$ is finite whenever the second moment $\sum_{j>0}j^2 \varphi_j$
of the distribution $\{\varphi_k,k\ge1\}$ exists.
\item If $\nu<1$, then  $A_n  = \mathcal{\mathcal{O}}(e^{-\gamma
  n^{1-\nu}})$. Using classical techniques (Riemann sums and integration
by parts), one gets
\[
\sum_{i=j+1}^\infty e^{-\gamma  i^{1-\nu}}
\approx \int_{j}^\infty e^{-\gamma  x^{1-\nu}}dx
\approx \frac{j^\nu}{\gamma(1-\nu)} e^{-\gamma  j^{1-\nu}},
\]
and hence $M$ is finite whenever $\sum_{j>0}j^{\nu+1} \varphi_j$ is.
\end{itemize}
This concludes the proof of the proposition, the results of which will
be illustrated in Section~\ref{sec:aipower}.
\end{proof}

\subsection{Selecting POIs in terms of their cumulative score distribution}
\label{sec:cumul}
All models considered so far are based on exogenous
parameters $a_i$, supposed to be known and independent of $\>r(t)$. Actually, it turns out that
most of the computations  remain valid when these parameters are a
\emph{functional of the state} $\>r(t)$. We tackle hereafter such a
model, for which ergodicity conditions can be explicitly obtained.

The selection policies family we are interested in will be expressed as a
functional of the cumulative score distribution. Let
\[
P_i\egaldef\frac{1}{r(1)}\sum_{j=1}^ir_j
\]
be the proportion of POIs with score less of equal to $i$, and let $f$
be a positive convex function on $[0,1]$ such that $f(0)=0$, $f(1)=1$
and $f'(1)<\infty$. Then the probability of selecting a POI with a
score less or equal to $i$ is chosen to be $f(P_i)$ or, equivalently,
\begin{equation}\label{eq:cum}
\pi_i(\>r) = f(P_i)-f(P_{i-1}).
\end{equation}

An example of such a policy is given by the following simple
algorithm: $c>1$ POIs are selected uniformly, and the one with the
largest score will be visited. This model is reminiscent of the
queueing model of~\cite{VveDobKar}, which proves to be very efficient
in thermodynamical limit. The probability of selecting a POI with
score equal to $i$ is then given by~\eqref{eq:cum} when $f(x)=x^c$.

From a formal point of view, we remain in the framework of
Section~\ref{sec:poi-scores}, as it can be seen just by setting
\begin{equation}\label{eq:notation}
a_i=\frac{r(1)}{f'(1)r_i}\bigl[f(P_i)-f(P_{i-1})\bigr],
\quad K= \frac{r(1)}{f'(1)},
\end{equation}
and remarking that the $a_i's$ are no more exogenous constants as before.
To check they form an increasing positive sequence bounded by $1$, it
suffices to note that, since $f$ is a  convex function of $x$,
\[
\frac{r(1)}{r_i}\bigl[f(P_i)-f(P_{i-1})\bigr]
\le f'(P_i)
\le \frac{r(1)}{r_i}\bigl[f(P_{i+1})-f(P_{i})\bigr]
\le f'(1).
\]

While the equations are similar, the situation is nonetheless very
different from that encountered in Section~\ref{sec:poi-scores}. The
model is more difficult to solve, as the $\pi_i$'s do not have a
simple expression in terms of the score $i$, so that a formal solution
like~\eqref{eq:sol2} is not really usable; however, they can be
computed by recurrence, since the normalization constant $r(1)$ is
given by~\eqref{eq:r1}. As a consequence, the ergodicity condition is
easier to obtain and, interestingly enough, appears to be
\emph{insensitive} to the distributions $\theta$ and $\varphi$.

\begin{thm}
Assume that the POIs are visited according to
policy~\eqref{eq:cum}, with $f$ strictly convex, and therefore
$1<f'(1)<\infty$. Then, according to the definition given in Section
\ref{sec:stat}, the system is ergodic if, and only if,
\[
\frac{\alpha\theta'(1)}{\lambda\varphi'(1)} \le \frac{1}{f'(1)-1}.
\]
\end{thm}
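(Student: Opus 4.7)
The strategy is to translate ergodicity into summability of the sequence $(r_n)$ coming from the recursion~\eqref{eq:rec1}, and to read the theorem's inequality as the gain condition of that recursion. First, observe that the telescoping identity $\sum_{i=1}^N\pi_i=f(P_N)-f(P_0)$ together with $P_N\to 1$ (valid as soon as $r(1)<\infty$) forces $\overline\pi=1$. Hence ergodicity is equivalent to $r(1)<\infty$, and Corollary~\ref{coro1} combined with the definition~\eqref{eq:notation} then fixes
\[
\mu r(1)=\alpha\theta'(1)+\lambda\varphi'(1),\qquad K=\frac{r(1)}{f'(1)}.
\]
Setting $\gamma\egaldef\alpha\theta'(1)/(\mu K)=\alpha\theta'(1)f'(1)/[\alpha\theta'(1)+\lambda\varphi'(1)]$, the stated inequality reads exactly $\gamma\le 1$.

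For sufficiency, I would use convexity of $f$ to write $\pi_i\le f'(1)(P_i-P_{i-1})=f'(1)r_i/r(1)$, which gives the pointwise majorization $K\pi(z)\le r(z)$ on $[0,1]$. Substituting in~\eqref{eq:stat2} yields
\[
r(z)\Bigl[1-\frac{\alpha z\Theta(z)}{\mu K}\Bigr]\le\frac{\lambda z\Phi(z)}{\mu},\qquad z\in[0,1],
\]
and, when $\gamma<1$, the bracket stays positive up to $z=1$, so letting $z\to 1$ gives $r(1)\le\lambda\varphi'(1)/[\mu(1-\gamma)]<\infty$, i.e.\ ergodicity.

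For necessity, I would proceed by contradiction. Assume the system is ergodic but $\gamma>1$. The companion convexity inequality $\pi_i\ge f'(P_{i-1})(P_i-P_{i-1})$, together with $P_i\to 1$ and continuity of $f'$ at $1$, yields $a_i\to 1$. Fix $\varepsilon>0$ small enough that $(1-\varepsilon)\gamma>1$ and pick $N$ so that $a_i\ge 1-\varepsilon$ for $i\ge N$; iterating~\eqref{eq:rec1} on this tail bounds $(r_n)_{n\ge N}$ from below by a linear convolution with kernel $\{\Theta_i\}$ and effective gain $(1-\varepsilon)\alpha/(\mu K)$. The characteristic root $\rho$ of this minorant, solution of $\rho=\alpha(1-\varepsilon)\Theta(1/\rho)/(\mu K)$, exceeds $1$, forcing $r_n$ not to decay to $0$, which contradicts summability. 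The critical case $\gamma=1$ requires a Tauberian refinement that exploits the strict inequality $a_i<1$ (valid for strictly convex $f$) in the majorization of Step~2.

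The main obstacle is the necessity step: unlike the exogenous setting of Section~\ref{sec:score1}, where the explicit consistency equation~\eqref{eq:consistency} is directly available, here the weights $a_i$ depend on the unknown sequence $(r_i)$ through $(P_i)$. Turning the asymptotic characteristic-root argument into a rigorous lower bound on the tail of $(r_n)$, while disentangling this self-reference, is the technical heart of the proof.
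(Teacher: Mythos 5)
Your reduction is sound: ergodicity amounts to the self-consistency of the ansatz $\overline{\pi}=1$, which fixes $\mu r(1)=\alpha\theta'(1)+\lambda\varphi'(1)$ and $K=r(1)/f'(1)$, and your $\gamma=\alpha\theta'(1)f'(1)/[\alpha\theta'(1)+\lambda\varphi'(1)]$ is exactly the quantity $\frac{\alpha\theta'(1)}{\mu r(1)}f'(1)$ that the paper compares to $1$. The sufficiency step via the majorization $K\pi(z)\le r(z)$ injected into~\eqref{eq:stat2} is a legitimate alternative route for $\gamma<1$. But there are two genuine gaps. First, the theorem's condition is a non-strict inequality, so the critical case $\gamma=1$ belongs to the ergodic region and must be proved; there your bracket $1-\alpha z\Theta(z)/(\mu K)$ vanishes at $z=1$ and the argument collapses, and the ``Tauberian refinement'' you invoke is left entirely open. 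Second, the necessity direction is only a sketch: the passage from $a_i\to1$ to a rigorous lower convolution bound with characteristic root exceeding $1$, in the presence of the self-referential dependence of the $a_i$'s on $(r_i)$, is precisely what you flag as unresolved --- so the ``only if'' half is not established.

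The paper dissolves both difficulties with one more elementary device, which you would do well to adopt: dividing~\eqref{eq:rec0} by $r(1)$ yields an autonomous recursion for the cumulative distribution,
\[
P_{n+1}=\frac{\alpha}{\mu r(1)}\sum_{i=0}^{n-1}f(P_{n-i})\Theta_i+\frac{\lambda}{\mu r(1)}\sum_{i=1}^{n}\Phi_i,
\]
a non-decreasing sequence bounded by $1$, whose limit $P_\infty$ satisfies (via the Toeplitz lemma) the scalar fixed-point equation $P_\infty=g(P_\infty)$ with $g(x)=\frac{\alpha\theta'(1)}{\mu r(1)}f(x)+\frac{\lambda\varphi'(1)}{\mu r(1)}$, a strictly convex function with $g(1)=1$. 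Convexity then settles both directions at once, including the boundary case: if $g'(1)\le1$ then $g(x)>x$ for all $x<1$, so $P_\infty=1$ and $\overline{\pi}=f(P_\infty)=1$; if $g'(1)>1$ there is a fixed point $P^*<1$ trapping the monotone sequence, whence $\overline{\pi}<1$. Passing to the $P_n$ recursion removes the self-reference that is the main obstacle in your plan, and reduces the whole theorem to a one-dimensional convexity picture.
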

\begin{proof}
The recurrence relation~\eqref{eq:rec0} takes the form
\[
 P_{n+1}-P_n=\frac{\alpha}{\mu r(1)}
    \sum_{i=0}^{n-1}\bigl(f(P_{n-i})-f(P_{n-i-1})\bigr)\Theta_i
    +\frac{\lambda}{\mu r(1)}\Phi_n,
\]
which by a direct summation yields
\begin{equation}\label{eq:P}
 P_{n+1}=\frac{\alpha}{\mu r(1)}\sum_{i=0}^{n-1}f(P_{n-i})\Theta_i
   +\frac{\lambda}{\mu r(1)}\sum_{i=1}^n\Phi_i,\quad\forall\,n\ge0.
\end{equation}

The form~\eqref{eq:P} allows for a direct graphical analysis of the
convergence problem, starting from the following simple facts. Using
\eqref{eq:r1} with $\overline{\pi}=1$ , it is straightforward to check that the
non-decreasing sequence defined by~\eqref{eq:P} is bounded by $1$.
Therefore it always converges to a finite value
\[
  P_\infty\egaldef \lim_{n\to\infty}P_n \le 1,
\]
that satisfies, by a direct application of the celebrated Toeplitz lemma,
\[
  P_\infty=\frac{\alpha}{\mu r(1)}\theta'(1)f(P_\infty)+\frac{\lambda}{\mu r(1)}\varphi'(1).
\]

It is actually possible for the sequence to converge to some
$P_\infty<1$ but, $f(.)$ being a convex function, this can only
happen when the slope at the point $1$ is greater than $1$. This concludes
the theorem, since~\eqref{eq:r1} implies the equivalence
\[
\frac{\alpha\theta'(1)}{\mu r(1)}f'(1) \le 1 \quad\Longleftrightarrow\quad \frac{\alpha\theta'(1)}{\lambda\varphi'(1)} \le \frac{1}{f'(1)-1}.
\]
\end{proof}

\subsection{About transience: herding phenomena} \label{sec:transience}
Hereafter, we do not pretend to give exhaustive rigorous proofs of all our
claims, which can rather be viewed as \emph{very likely true}
conjectures, and have been verified by several numerical simulation
runs.

Our concern is to extend Theorem~\ref{thm:ergo1}, when there is no
solution to~\eqref{eq:conderg}, i.e.\ when its right-hand side member
is finite and $\alpha$ is too large. In fact, it appears that the
system has still a stationary regime, but a \emph{finite number of
  POIs have an infinite score}. This means that, as $t\to\infty$, the
selection policy $\pi(\>t)$ becomes \emph{defective}, namely (see
equation~\eqref{eq:pibar}) when
\[
\overline{\pi}<1.
\]
This implies exactly
\[
\pi_i(\>r)= \frac{a_ir_i}{\sum_{j\ge1}a_jr_j + \delta},
\]
where $\delta$ is a positive constant representing the proportion of
POIs going to infinity. Obviously, $\delta=0$ when the system is
\emph{normally} ergodic, i.e.\ no escape of mass to infinity. By
contrast, $\delta>0$ corresponds to a phase transition with
condensation. Then the consistency equation~\eqref{eq:consistency}
must be modified and becomes
\begin{equation}\label{eq:transient1}
1= \frac{\lambda}{\alpha} F\biggl(\frac{\alpha}{\mu K}\biggr) + \frac{\delta}{K},
\end{equation}
which has \emph{always} a solution $(K,\delta)$ by monotonicity with
respect to $K$, remembering that $F(1)<\infty$, while
$F(1+\varepsilon)=\infty, \forall\varepsilon>0$, and $\alpha >\lambda
F(1)$. Equation~\eqref{eq:transient1} is not sufficient to determine
the two unknowns $K$ and $\delta$. To get a second equation, we resort
to the following heuristic \emph{stochastic least action principle},
which in our opinion should be provable in a very general context.

\begin{ansatz} \emph{\textbf{The least action principle}}. When a
multicomponent irreducible Markovian system ceases to be ergodic, the
first component(s) becoming infinite with positive probability can be
identified by \emph{continuity} with respect to the set of parameters
$\mathcal{P}$ defining the system.
\end{ansatz}

This definition, which might look obscure to the reader, can be
rephrased by saying that the phase transition takes place as soon as
one touches some regions (hyperplanes or surfaces) in $\mathcal{P}$.
This is for instance the case for the famous Jackson-Kelly stochastic
queueing networks\ldots

\smallskip Applying this principle leads to say that we are looking
for the minimal $\delta>0$ ensuring the system
\[
 \>r = \lim_{t\to\infty} \>r(t)
\]
is no more ergodic, so that \emph{some POIs go to infinity}. This
implies necessarily
\begin{equation}\label{eq:transient2}
\frac{\alpha}{\mu K} = 1.
\end{equation}
Then~\eqref{eq:transient1} and~\eqref{eq:transient2} yield at once
\[
\begin{cases}
\DD\delta=\frac{\alpha-\lambda F(1)}{\mu}, \\[0.3cm]
\DD\overline{\pi}= \frac{K-\delta}{K}= \frac{\lambda F(1)}{\alpha} <1,\\[0.3cm]
\DD r(1)= \frac{\lambda}{\mu} [F(1) + \varphi'(1)].
\end{cases}
\]
A possible way to decide analytically when $\overline{\pi}<1$ would be
to design a converging iterative scheme to solve system~\eqref{eq:sys}
or~\eqref{eq:trans2} (see e.g.\ the polling network analyzed in
\cite{DelFay}): this appears to be another interesting, but
practically intricate problem.

\section{Some simple examples and limit cases} \label{sec:examples}
For the sake of completeness, we quote hereafter simple peculiar
cases, which hopefully shed light on some aspects of the behavior of
the system.

\subsection{No creation of POIs, i.e.\ \texorpdfstring{$\lambda=0$}{lambda=0}}
In this case, the system is empty at steady state for any Markovian
policy. The argument is elementary, since the vector state $\>0$ is
absorbing due to the $M/M/\infty$ character of the score decreasing
process.
\subsection{The lower bound \texorpdfstring{$\alpha=0$}{alpha=0}}
Here no POI will see any increase of score upon being visited, and
hence one gets a clear lower bound for the model. Solving
\eqref{eq:stat2} leads to
\[
 r_i=\frac{\lambda}{\mu}\sum_{k\ge i}\varphi_k,\quad r(1)= \frac{\lambda \varphi'(1)}{\mu}, \quad
 r'(1) = \frac{\lambda\bigl [\varphi''(1)+2\varphi'(1)\bigr]}{2\mu}.
\]
Consequently, when $\alpha\ne0$, we have the following bound, valid
for any point selection scheme (i.e.\ policy) $\pi$,
\[
 r'(1)\ge \frac{\lambda\bigl [\varphi''(1)+2\varphi'(1)\bigr]}{2\mu},
\]
and therefore the mean score $r'(1)/r(1)$ of a POI can be finite only when
$\varphi''(1)$ is finite. This leads to some unintuitive behavior, since it
was shown in Proposition~\ref{prop:ergo2} that having $\varphi''(1)=\infty$ can
at the same time make the system always ergodic.

\subsection{Score independent visits}
When the POIs are visited uniformly regardless of their score, which
amounts to the choice
\[
 \pi_i(\>r)=\frac{r_i}{\sum_{i\ge 1}r_i},
\]
we have $\pi(z)=r(z)/r(1)$. Then equation~\eqref{eq:stat2} allows to
derive immediately the corresponding generating function, which takes
the compact explicit form
\[
 r(z)= \frac{\lambda r(1)(\varphi(z)-1)}{\alpha (1-\theta(z))+\mu r(1)(1-z^{-1})},
\]
where, for $\lambda\ne0$,
\[
r(1) = \frac{\alpha \theta'(1) + \lambda \varphi'(1)}{\mu},
\]
and one can check the denominator of $r(z)$ does not vanish for
$|z|<1$. Moreover, $r(1)$ is clearly finite and given by equation
\eqref{eq:r1}. So, we have obtained a unique function $r(z)$ analytic
in the unit disc and continuous for $|z|=1$. Consequently the system
admits of a unique limit invariant measure $\{r_i,i\ge1\}$ \emph{for
  any value of the parameters}. Intuitively, this is due to the fact
that the decrease rate of the scores is somehow proportional to the
number of points, like in a $M/M/\infty$ queue. But, when the
selection process favors heavily enough the POIs having a big score,
proper ergodicity conditions appear, as in the models analyzed in
Section~\ref{sec:poi-scores}.
\subsection{An unstable  model with a score dependent policy} \label{sec:notreal}

A policy that may seem attractive at first sight is to give to each
POI a weight proportional to its score
\[
  \pi_i(\>r)=\frac{ir_i}{\sum_{i\ge 1}ir_i},
\]
so that $\pi(z)= zr'(z)/r'(1)$ and~\eqref{eq:stat2} becomes a
differential equation
\begin{equation*}
\alpha\left[1-\theta(z)\right]\frac{zr'(z)}{r'(1)}
+\mu [1-z^{-1}]r(z)=\lambda [\varphi(z)-1].
\end{equation*}

However, the weight $a_i=i$ given to the score $i$ is unbounded, and
from Proposition~\ref{prop:trans} we conclude that no choice of parameters can
lead to a stable system under this policy. More precisely, \emph{there
  exists at least one POI having an infinite score}.
\subsection{A case study of Proposition~\ref{prop:ergo2}}\label{sec:aipower}
Assuming the conditions of Proposition~\ref{prop:ergo2} hold, let us take in
addition $\varphi(z)=z$ and, for some $\gamma>0$,
\[
  a_i = \Bigl(\frac{i}{i+1}\Bigr)^\gamma.
\]

This implies $A_k=1/(k+1)^\gamma$ and therefore
(\ref{eq:sol2})--(\ref{eq:consistency}) become
\begin{align*}
 r_{k+1}&=\frac{\lambda}{\mu}\frac{1}{(k+1)^\gamma}\Bigl(\frac{\mu K}{\alpha}\Bigr)^k,\\
 \frac{\alpha}{\lambda}&=F(x)=\sum_{k=1}^\infty A_k x^k=\Phi(x,\gamma,1)-1,
\end{align*}
where $\Phi$ stands for the classical Lerch function~\cite[formula
9.550]{GraRyz}
\[
 \Phi(x,\gamma,v)\egaldef \sum_{k=0}^\infty\frac{x^k}{(k+v)^\gamma}.
\]

When $\varphi(z)=z$, the computation of the expected score of POIs per
player $r'(1)$ is easy:
\[
 r'(1)
=\sum_{k=1}^\infty k r_k
= \sum_{k=1}^\infty \frac{\lambda}{\mu}\sum_{k=1}^\infty
k A_k\Bigl(\frac{\alpha}{\mu K}\Bigr)^k
= \frac{\lambda}{\mu}\frac{\alpha}{\mu K}F'\Bigl(\frac{\alpha}{\mu K}\Bigr).
\]
\begin{figure}
\centering\includegraphics[angle=-90,width=0.8\linewidth]{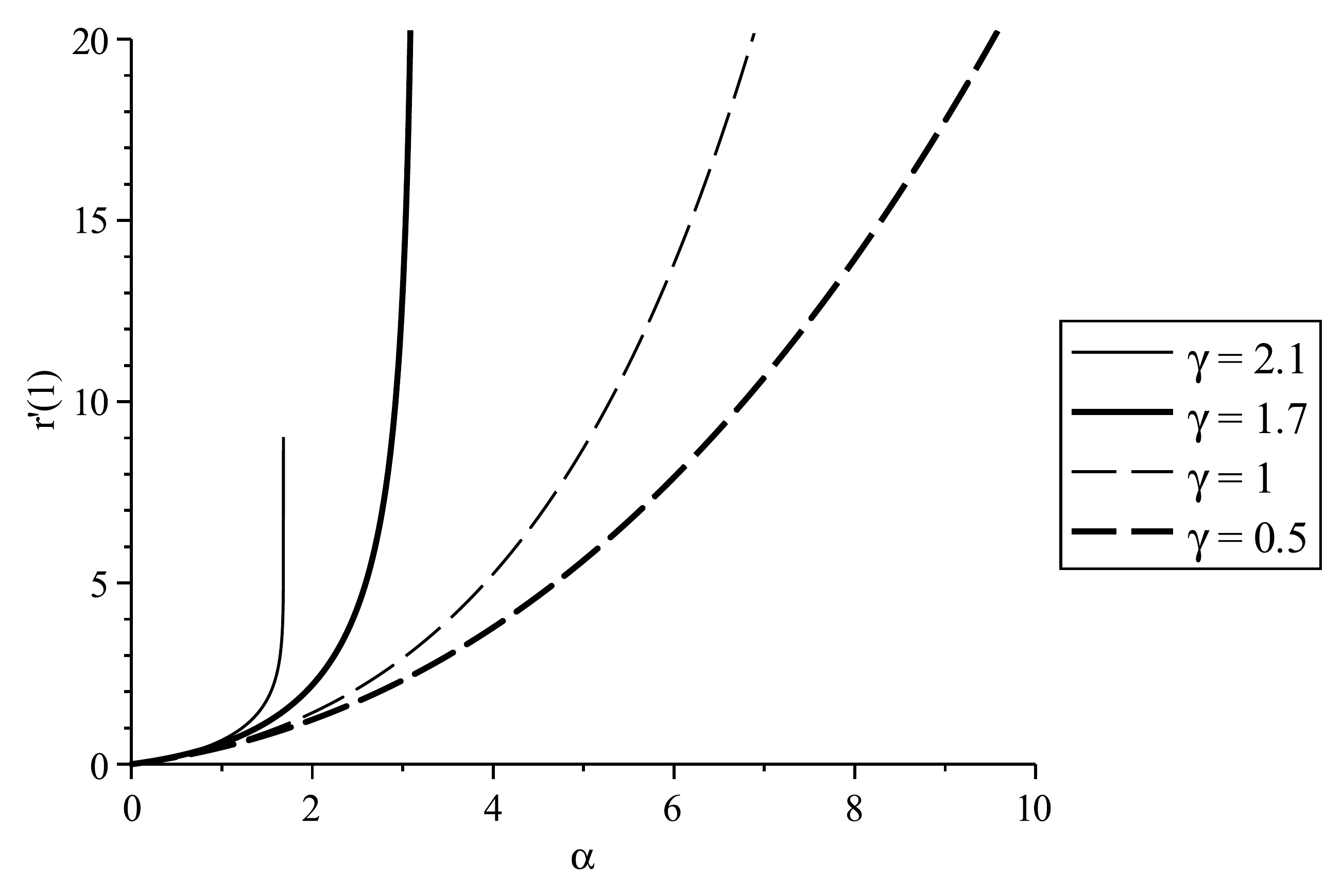}
\caption{Expected score of POIs per player $r'(1)$ as a function of
  the visit rate $\alpha$. The parameters $\lambda=\mu=3$ remain
  fixed.}\label{fig:curves}
\end{figure}

The above formulas allow to represent the expected score per player
as a function of $\alpha$ in a parametric way, by letting $x$ vary
from $0$ to $1$ (Figure~\ref{fig:curves}). As expected from the
theoretical results obtained above, $3$ different situations prevail:
\begin{itemize}
\item $\gamma\le 1$: the system is always ergodic (dashed
curves);
\item $1<\gamma\le 2$: there is an explicit ergodicity condition on $\alpha$,
and $r'(1)$ tends to infinity at the boundary;
\item $\gamma>2$: there is an explicit ergodicity condition on $\alpha$,
but $r'(1)$ remains bounded.
\end{itemize}

\section{Miscellaneous outcomes and open problems} \label{sec:general}
The next three paragraphs present briefly some ideas and mathematical
questions concerning the general model.
\subsection{General approach via an integral equation}\label{sec:general1}
Consider the situation of Section~\ref{sec:poi-scores}, and let $a(z)
\egaldef \sum_{j\ge1}a_jz^j$. Denote the respective convergence radii
of $r(\cdot)$ and $a(\cdot)$ by $\rho_r$ and $\rho_a$. Then, by
Hadamard's multiplication theorem~\cite{Tit:1} and definition
\eqref{eq:ai}, we can write
\[
\pi(z) = \frac{1}{2i\pi K}\int_\mathcal{L} r(w)a\left(\frac{z}{w}\right) \frac{\mathrm{d}w}{w},
\]
where $\mathcal{L}$ stands for a circle of radius $\rho$, centered at
the origin, such that $|z|/\rho_a<\rho<\rho_r$. This yields the
non-linear integral equation
\begin{equation}\label{eq:intequ}
\frac{\mu r(z)}{z} = \frac{\alpha[1-\theta(z)]}{2i\pi K(1-z)} \int_\mathcal{L} r(w)a\left(\frac{z}{w}\right) \frac{\mathrm{d}w}{w} + \frac{\lambda(1-\varphi(z))}{1-z}.
\end{equation}
Methods of solution of~\eqref{eq:intequ} (e.g.\ via boundary value
problem or Fredholm integral equation) depend heavily on the explicit
form of $a(\cdot)$. Alas, tractable explicit expressions can hardly be
derived (nor expected!), even when $a(\cdot)$ is a polynomial.

\subsection{General approach via a matrix form solution}\label{sec:matrix}
The problem becomes more intricate when $\theta(z)$ is an arbitrary
probability generating function with $\varphi'(1)<\infty$. Then the
following approach, based on the recursion~\eqref{eq:rec1}, involves
infinite matrices and seems to be computationally quite acceptable. We
sketch hereafter its main lines.

Clearly, equation~\eqref{eq:rec1} can be rewritten in the vector form
\begin{equation}\label{eq:vector}
\>U_{n+1} = M_n\Bigl(\frac{\alpha}{\mu K}\Bigr)\>U_n +\frac{\lambda}{\mu}\>V_{n+1},
\end{equation}
where
\begin{itemize}
\item $\>U_n$ is the $n$-column vector $[r_n r_{n-1} \ldots r_1]^T$, where ${}^T$ denotes the transpose operation;
\item $\>V_n$ is the $n$-column vector $[\Phi_{n-1}, 0, \dots,0]^T$;
\item $M_n(x)$ is a $(n+1)\times n$ matrix for all $x>0$. Its first
row is the vector $x\>B_n$, where $\>B_n$ denotes the row vector
\[
[a_n\Theta_0, a_{n-1}\Theta_1, \dots,a_1\Theta_{n-1}],
\]
and the remaining internal matrix is exactly the $n\times n$ identity
matrix denoted by $I_n$.
\end{itemize}
By a direct algebra, the solution takes the form
\begin{equation}\label{eq:vector2}
\>U_{n+1} =  \frac{\lambda}{\mu}\sum_{i=1}^{n+1} Q_{n,i}\Bigl(\frac{\alpha}{\mu K}\Bigr)\>V_i , \quad n\ge0,
\end{equation}
where
\[
\begin{cases}
Q_{n,i}(x) = M_n(x)M_{n-1}(x)\ldots M_i(x), \quad i\le n, \\[0.5cm]
Q_{n,n+1}(x) \egaldef I_{n+1}.
\end{cases}
\]
The next step is to carry out the scalar product of~\eqref{eq:vector2}
with the row vector
 \[
 \>A_{n+1}\egaldef [a_{n+1},\ldots,a_1],
\]
and then to let $n\to\infty$. Keeping in mind that $K= \sum_{i\ge1}
a_i r_i$, we get the following final relationship, which is formally
similar to the one given by~\eqref{eq:consistency},
\begin{equation}\label{eq:final}
 \frac{\alpha}{\lambda} = \frac{\alpha}{\mu K}\sum_{n=0}^\infty\sum_{i=1}^{n+1}  \>A_{n+1}\widetilde{Q}_{n,i}\Bigl(\frac{\alpha}{\mu K}\Bigr)\>V_i .
\end{equation}
From the definition of $M_n$, each term of the double sum in
\eqref{eq:final} is clearly a \emph{decreasing} scalar function of
$K$. Hence, ergodicity conditions could be obtained along the same
lines as in Theorem~\ref{thm:ergo1}, by analyzing the series in the
right-hand side member of~\eqref{eq:final}.

\subsection{The case \texorpdfstring{$\theta'(1).\varphi'(1)=\infty$}{theta'(1)phi'(1)=oo}}
When either $\theta'(1)$ or $\varphi'(1)$ are infinite, we are a
priori facing a \emph{transient} phenomenon as $r(1)=\infty$. However,
the system can have an interesting behavior provided that $K=
\sum_{i\ge1} a_i r_i<\infty$, which yields necessarily
\[
\liminf_{i\to\infty}a_i = 0,
\]
but this would correspond to a \emph{non-herding} system, which is an
other story, although most of the mathematical arguments presented in
Section~\ref{sec:stat} could be readily applied.

\appendix
\section{Generators, cores and weak convergence}\label{secC}
Here we quote the material necessary for the proof of Theorem
\ref{th:thermo}. The results are borrowed from~\cite{EK}.
\begin{thm}\label{thm:closed}
If $O$ is the generator of a strongly continuous semigroup $\{T(t)\}$
on $\L$, then its domain $\D(O)$ is dense in $\L$ and $O$ is closed.
\end{thm}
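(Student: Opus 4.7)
The plan is to reproduce the standard proof for generators of $C_0$ semigroups on a Banach space, splitting it into the two assertions: density of $\D(O)$ and closedness of $O$.

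For density, the idea is to regularize an arbitrary $f\in\L$ by time-averaging. For every $s>0$ and $f\in\L$, set
\[
f_s \;\egaldef\; \frac{1}{s}\int_0^s T(u)f\,\mathrm{d}u,
\]
the integral being well defined as a Bochner (or Riemann) integral since $u\mapsto T(u)f$ is continuous by the $C_0$ hypothesis. I would then verify that $f_s\in\D(O)$ by direct computation of the difference quotient
\[
\frac{T(t)f_s-f_s}{t} \;=\; \frac{1}{ts}\!\left[\int_s^{s+t}T(u)f\,\mathrm{d}u - \int_0^t T(u)f\,\mathrm{d}u\right],
\]
which, by continuity of $u\mapsto T(u)f$, converges to $\frac{1}{s}[T(s)f-f]$ as $t\to 0^+$. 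Finally, strong continuity at $0$ gives $f_s\to f$ as $s\to 0^+$, so $\D(O)$ is dense in $\L$.

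For closedness, suppose $f_n\in\D(O)$ with $f_n\to f$ and $Of_n\to g$ in $\L$. The key identity, valid for any $h\in\D(O)$, is
\[
T(t)h - h \;=\; \int_0^t T(u)\,Oh\,\mathrm{d}u,
\]
obtained by showing that $u\mapsto T(u)h$ is strongly differentiable with derivative $T(u)Oh$ (a short calculation in which one commutes $T(u)$ with the difference quotient defining $O$). Applying this to each $f_n$ and passing to the limit $n\to\infty$ yields
\[
T(t)f - f \;=\; \int_0^t T(u)\,g\,\mathrm{d}u,
\]
where on the right-hand side one uses the uniform bound $\sup_{u\in[0,t]}\|T(u)\|<\infty$ — a standard consequence of the $C_0$ property via the uniform boundedness principle — to interchange limit and integral. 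Dividing by $t$ and letting $t\to 0^+$ gives $\frac{T(t)f-f}{t}\to g$ by strong continuity of $u\mapsto T(u)g$ at $0$, hence $f\in\D(O)$ and $Of=g$. This establishes that $O$ is closed.

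The proof has no real obstacle; it is entirely routine $C_0$-semigroup theory (cf.~Ethier--Kurtz~\cite{EK}). The only point requiring a bit of care is the fundamental identity $T(t)h-h=\int_0^t T(u)Oh\,\mathrm{d}u$ for $h\in\D(O)$, which I would prove by showing the one-sided derivative from the right agrees with $T(u)Oh$ and then using continuity to extend to a two-sided derivative on $(0,t)$; once this is in hand, both parts of the theorem fall out almost immediately.
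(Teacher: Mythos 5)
Your argument is correct and is precisely the standard Ethier--Kurtz proof: density via the time-averages $\frac{1}{s}\int_0^s T(u)f\,\mathrm{d}u$, and closedness via the identity $T(t)h-h=\int_0^t T(u)Oh\,\mathrm{d}u$ together with local boundedness of $\|T(u)\|$. The paper itself gives no proof of this statement --- it is quoted in the appendix as borrowed from~\cite{EK} --- so your write-up simply supplies the standard argument that the cited reference contains.
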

\begin{defi} \textup{\cite[p.~17]{EK}}
Let $O$ be a closed linear operator with domain $\D(O)$. A subspace
$S$ of $\D(O)$ is said to be a \emph{core} for $O$ if the closure of
the restriction of $O$ to $S$ is equal to $O$, i.e., if
$\overline{O_{|S}}=O$.
\end{defi}
The next proposition is an important criterion to
characterize a core.

\begin{thm}\label{thm:core} \textup{\cite[p.~17]{EK}} Let $O$ be the generator of a strongly
continuous contraction semigroup $\{T(t)\}$ on $\L$. Let $\D_0$ and
$\D$ be dense subspaces of $\L$ with $\D_0\subset\D\subset \D(O)$. If
$T(t): D_0\to\D$ for all $t\ge0$, then $\D$ is a core for $O$.
\end{thm}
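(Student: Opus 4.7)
The plan is to use the standard resolvent criterion for cores: since $O$ generates a strongly continuous contraction semigroup, the resolvent $R(\lambda,O)=(\lambda-O)^{-1}$ is defined and bounded by $1/\lambda$ for every $\lambda>0$, and it is given by $R(\lambda,O)h=\int_0^\infty e^{-\lambda t}T(t)h\,\mathrm{d}t$. Theorem~\ref{thm:closed} tells us that $O$ is closed, so what needs to be proved is that the closure $\overline{O_{|\D}}$ equals $O$. I would reduce this to showing that $(\lambda-O)(\D)$ is dense in $\L$ for some fixed $\lambda>0$, which I take as my main intermediate goal.

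Fix such a $\lambda>0$ and an arbitrary $h\in\L$. Using density of $\D_0$, pick $f_\varepsilon\in\D_0$ with $\|f_\varepsilon-h\|<\varepsilon$, and set $g_\varepsilon\egaldef R(\lambda,O)f_\varepsilon\in\D(O)$. The key construction is to approximate the defining integral of $g_\varepsilon$ by Riemann sums
\[
g_\varepsilon^{(N)}=\sum_{k=1}^N c_k^{(N)}\,T(t_k^{(N)})\,f_\varepsilon,\qquad c_k^{(N)}>0,\ t_k^{(N)}\ge 0.
\]
By the invariance hypothesis $T(t):\D_0\to\D$ and linearity of $\D$, every $g_\varepsilon^{(N)}$ lies in $\D$. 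Since $f_\varepsilon\in\D_0\subset\D(O)$, the identity $OT(t)f_\varepsilon=T(t)Of_\varepsilon$ holds and $t\mapsto T(t)Of_\varepsilon$ is strongly continuous, so strong continuity together with the exponential weight yield simultaneously $g_\varepsilon^{(N)}\to g_\varepsilon$ and $Og_\varepsilon^{(N)}\to\int_0^\infty e^{-\lambda t}T(t)Of_\varepsilon\,\mathrm{d}t=Og_\varepsilon=\lambda g_\varepsilon-f_\varepsilon$ in $\L$. Therefore $(\lambda-O)g_\varepsilon^{(N)}\to f_\varepsilon$ with each $g_\varepsilon^{(N)}\in\D$, and letting $\varepsilon\to 0$ gives density of $(\lambda-O)(\D)$ in $\L$.

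To conclude, I would take an arbitrary $f\in\D(O)$, set $h=(\lambda-O)f\in\L$, and pick $g_n\in\D$ with $(\lambda-O)g_n\to h$. Since $R(\lambda,O)$ is bounded, $g_n=R(\lambda,O)(\lambda-O)g_n\to R(\lambda,O)h=f$, and $Og_n=\lambda g_n-(\lambda-O)g_n\to\lambda f-h=Of$; this exhibits $f$ together with $Of$ as a limit point of the graph of $O_{|\D}$, so $f\in\D(\overline{O_{|\D}})$ and $\overline{O_{|\D}}f=Of$. Together with the trivial inclusion $\overline{O_{|\D}}\subset O$ (because $O$ is closed), this shows $\overline{O_{|\D}}=O$, i.e.\ $\D$ is a core. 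The only delicate point — and the main obstacle — is the simultaneous convergence of the Riemann sums in $\L$ and under $O$, which relies on combining strong continuity of $\{T(t)\}$, integrability of $e^{-\lambda t}$, and the commutation $OT(t)=T(t)O$ valid on $\D(O)$; everything else is routine manipulation of dense subspaces and the resolvent identity.
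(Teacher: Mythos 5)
Your proof is correct and takes essentially the same route as the source: the paper states Theorem~\ref{thm:core} without proof, citing~\cite{EK}, and your argument --- reducing the core property to density of $(\lambda-O)(\D)$ in $\L$ via the bounded resolvent, then approximating $R(\lambda,O)f_\varepsilon$ for $f_\varepsilon\in\D_0$ by Riemann sums that lie in $\D$ thanks to the invariance $T(t):\D_0\to\D$, with simultaneous convergence under $O$ from the commutation $OT(t)f=T(t)Of$ on $\D(O)$ --- is precisely the standard Ethier--Kurtz proof of this proposition. The step you flag as delicate (joint convergence of the Riemann sums and of their images under $O$, using strong continuity and the integrable weight $e^{-\lambda t}$) is handled correctly, and the closing resolvent manipulation showing $\overline{O_{|\D}}=O$ is sound.
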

The proof of Theorem~\ref{thm:thermo} relies heavily on the next general
proposition.
\begin{thm}\label{thm:convergence} \textup{\cite[Theorem 6.1, p.~28]{EK}}
In addition to $\L$, let $\L_k, k\geq 1$, be a sequence of Banach
spaces, $\Pi_k : \L\rightarrow\L_k$ be a bounded linear
transformation, subject to the constraint $\sup_k
\|\Pi_k\|<\infty$. Let also $\{T_k(t)\}$ and $\{T(t)\}$ be strongly
continuous contraction semigroups on $\L_k$ and $\L$ with generators
$O_k$ and $O$. We write $f_k\rightarrow f$ to mean exactly
 $$f\in\L, \quad f_k\in\L_k \quad \mathrm{for} \ k\geq 1, \quad
\mathrm{and}\ \lim_{k\rightarrow\infty}\|f_k - \Pi_kf\|=0 .$$ Then, if
$D$ is a core for $O$, the following are equivalent:
\begin{itemize}
\item[{\rm (a)}] For each $f\in\L$, $T_k(t)\Pi_kf\rightarrow T(t)f$ for all
$t\geq 0$, uniformly on bounded intervals.
\item[{\rm (b)}] For each $f\in\L$, $T_k(t)\Pi_kf\rightarrow T(t)f$ for all
$t\geq 0$.
\item[{\rm (c)}] For each $f\in D$, there exists $f_k\in\D(O_k)$ for
each $k\geq 1$, such that $f_k\rightarrow f$ and
$O_kf_k\rightarrow Of$.
\end{itemize}
\end{thm}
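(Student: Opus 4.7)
The plan is to establish the equivalence by closing the loop $(a) \Rightarrow (b) \Rightarrow (c) \Rightarrow (a)$. The implication $(a) \Rightarrow (b)$ is immediate. The real content is the Trotter--Kato-style passage between generator convergence on a core and strong semigroup convergence; since all semigroups involved are contractions, resolvents $R_k(\lambda) \egaldef (\lambda - O_k)^{-1}$ and $R(\lambda) \egaldef (\lambda - O)^{-1}$ exist on $(0,\infty)$ with $\|R_k(\lambda)\| \le 1/\lambda$, and these are the natural bridge between the two worlds.

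For $(b) \Rightarrow (c)$, I would convert semigroup convergence into resolvent convergence via the Laplace transform identity $R_k(\lambda)g = \int_0^\infty e^{-\lambda t} T_k(t)g\,\mathrm{d}t$. Writing
\[
R_k(\lambda)\Pi_k f - \Pi_k R(\lambda) f = \int_0^\infty e^{-\lambda t}\bigl[T_k(t)\Pi_k f - \Pi_k T(t) f\bigr]\mathrm{d}t,
\]
the integrand is dominated by $e^{-\lambda t}(1+\sup_k\|\Pi_k\|)\|f\|$ and tends to $0$ in $\L_k$ pointwise in $t$ by $(b)$; dominated convergence then yields $R_k(\lambda)\Pi_k f \to R(\lambda)f$ for every $f \in \L$. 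Given $f \in D$, setting $f_k \egaldef R_k(\lambda)\bigl[\lambda \Pi_k f - \Pi_k O f\bigr]$ places $f_k$ in $\D(O_k)$; resolvent convergence applied to $g = \lambda f - Of$ gives $f_k \to f$, and the algebraic identity $O_k f_k = \lambda f_k - \lambda \Pi_k f + \Pi_k O f$ forces $O_k f_k \to O f$.

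For $(c) \Rightarrow (a)$, I would first upgrade the core-level generator convergence to full resolvent convergence. For $f \in D$ with approximants $f_k$ as in $(c)$, we have $(\lambda - O_k)f_k \to (\lambda - O)f$, so $R_k(\lambda)\Pi_k h \to R(\lambda)h$ for every $h$ in the range $(\lambda - O)D$; this range is dense in $\L$ precisely because $D$ is a core for $O$, and the uniform bound $\|R_k(\lambda)\| \le 1/\lambda$ together with an $\varepsilon/3$-argument extends the convergence to all of $\L$. From there, the classical Yosida-approximation route compares $T_k(t)$ with $\exp(t O_k^\lambda)$, where $O_k^\lambda \egaldef \lambda O_k R_k(\lambda)$ is bounded; the telescoping estimate
\[
\bigl\|T_k(t)\Pi_k f - \exp(t O_k^\lambda)\Pi_k f\bigr\| \le t\,\|\lambda R_k(\lambda)\Pi_k f - \Pi_k f\|,
\]
combined with the analogous bound on the limit space and the easy-to-control convergence of the bounded-generator exponentials, delivers $(a)$.

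The main obstacle, in my view, lies in this last Yosida step: one needs the three comparison estimates above to be uniform in $k$, and in particular one must control $\exp(t O_k^\lambda)\Pi_k f - \Pi_k \exp(t O^\lambda) f$ purely from the resolvent convergence already obtained, using that $O_k^\lambda$ and $O^\lambda$ are bounded operators expressible as polynomials in the resolvents. The hypothesis $\sup_k \|\Pi_k\| < \infty$ is what makes all these estimates uniform across the family of Banach spaces $\L_k$, and without it the dominated-convergence arguments and the triangle-inequality closures used throughout would break down.
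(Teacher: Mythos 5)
Note first that the paper does not prove this statement at all: Theorem~\ref{thm:convergence} is quoted verbatim from \cite[Theorem 6.1, p.~28]{EK} as appendix background for the proof of Theorem~\ref{thm:thermo}, so there is no internal proof to compare against. Your sketch follows what is essentially the textbook Trotter--Kato route used in that reference: (a)~$\Rightarrow$~(b) trivially; (b)~$\Rightarrow$~(c) by Laplace-transforming the semigroups into resolvents and invoking dominated convergence together with $\sup_k\|\Pi_k\|<\infty$; and (c)~$\Rightarrow$~(a) by upgrading generator convergence on the core to resolvent convergence on the dense set $(\lambda-O)D$ (hence, via the uniform bound $\|R_k(\lambda)\|\le 1/\lambda$ and an $\varepsilon/3$-argument, on all of $\L$), then comparing with Yosida approximants. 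The first two implications are written correctly, including the choice $f_k=R_k(\lambda)\Pi_k(\lambda f-Of)$ and the identity $O_kf_k=\lambda f_k-\Pi_k(\lambda f-Of)$, and the density of $(\lambda-O)D$ is indeed exactly what the core hypothesis provides.

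The one step that is not right as written is the displayed Yosida comparison. The standard estimate is $\|T_k(t)g-\exp(tO_k^\lambda)g\|\le t\,\|O_kg-O_k^\lambda g\|$, which is valid only for $g\in\D(O_k)$ and whose right-hand side involves $O_kg$; your bound $t\,\|\lambda R_k(\lambda)\Pi_kf-\Pi_kf\|$ has the wrong structure, and in any case $\Pi_kf$ need not belong to $\D(O_k)$, so the estimate cannot be applied to $\Pi_kf$ directly. The repair is standard: route the comparison through the approximants, i.e.\ bound $\|T_k(t)f_k-\exp(tO_k^\lambda)f_k\|\le t\,\|(I-\lambda R_k(\lambda))O_kf_k\|$, use $O_kf_k\to Of$ together with the resolvent convergence already established to make this small uniformly in $k$ once $\lambda$ is large, and absorb the error $\|T_k(t)(\Pi_kf-f_k)\|\le\|\Pi_kf-f_k\|\to0$ by contractivity. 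Combined with the convergence of the bounded exponentials $\exp(tO_k^\lambda)\Pi_kf\to\exp(tO^\lambda)f$ (these operators being power series in the resolvents, with estimates uniform on bounded $t$-intervals), this closes (c)~$\Rightarrow$~(a) including the asserted uniformity on bounded intervals. In short, the architecture is correct and matches the cited source; only this comparison estimate needs to be restated and applied to the right vectors.
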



\end{document}